\numberwithin{equation}{section}
\newtheorem{Proposition}{Proposition}
\numberwithin{Proposition}{section}
\newtheorem{Theorem}{Theorem}
\numberwithin{Theorem}{section}
\newtheorem{Lemma}{Lemma}
\numberwithin{Lemma}{section}
\newtheorem{Remark}{Remark}
\numberwithin{Remark}{section}
\newtheorem{Corollary}{Corollary}
\numberwithin{Corollary}{section}
\newtheorem{definition}{Definition}
\newcommand{\pl}{\partial}
\newcommand{\lt}{\left}
\newcommand{\rt}{\right}
\newcommand{\ep}{\epsilon}
\newcommand{\epn}{{\epsilon_n}}
\newcommand{\andf}{\quad{\rm and}\quad}
\newcommand{\R}{\mathbb{R}}
\begin{document}
\title[]{Regularity and dynamics of weak solutions for one-dimensional  compressible Navier-Stokes equations with vacuum}

\author{Jin Tan$^\ast$, Yan-Lin Wang and  Lan Zhang}
\address[J. Tan]{Department of Mathematics,The Chinese University of Hong Kong, Shatin, Hong Kong,  China}
\email{jtan@math.cuhk.edu.hk}
\address[Y.-L. Wang]{School of Mathematical Sciences,  Zhejiang University of Technology,  Hangzhou 310023, China}
\email{ylwang@zjut.edu.cn}
\address[L. Zhang]{ School of Mathematics and Statistics, Wuhan University of Technology, Wuhan 430070, China}
\email{zhanglan91@whut.edu.cn}

\date{\today}
\thanks{$^\ast$corresponding author}

\let\origmaketitle\maketitle
\def\maketitle{
  \begingroup
  \def\uppercasenonmath##1{} 
  \let\MakeUppercase\relax 
  \origmaketitle
  \endgroup
}
\maketitle

\begin{abstract}
In the spirit of  D. Hoff's weak solution theory for the compressible Navier-Stokes equations (CNS) with bounded density,   in this paper we establish the  global  existence and regularity properties of finite-energy weak solutions to an  initial boundary value problem  of  one-dimensional CNS with general  initial data and vacuum.   The core of our proof is a global in time a priori estimate  for one-dimensional CNS that holds for any $H^1$  initial velocity and bounded initial density not necessarily strictly positive: it  could be  a density patch  or a vacuum bubble.   We also establish that the velocity and density decay exponentially to equilibrium.  As a by-product,   we obtain the quantitative dynamics of aforementioned two vacuum states.
\end{abstract}

\section{Introuduction}

In this paper,   we are concerned with the following initial boundary value problem (IBVP)  of  one-dimensional compressible Navier-Stokes (CNS) equations:
\begin{subequations}\label{equ}
\begin{align}
&\pl_t \rho+\pl_x(\rho u)=0, &&\ \ {\rm in}\ \ \Omega, \label{equ-a}\\
&\pl_t(\rho u)+\pl_x (\rho u^2)+\pl_x p =\pl_x(\mu(\rho)\pl_xu), &&\ \ {\rm in}\ \  \Omega, \label{equ-b}\\
&(\rho, u)(0, x)=(\rho_0, u_0)(x), &&\ \ {\rm in}\ \  \Omega,\label{equ-c}\\
& u=0,\ \ \ p(\rho)-\mu(\rho)\pl_x u=0, &&\ \ {\rm on}\ \ \pl  \Omega,\label{equ-d}
\end{align}
\end{subequations}
where $ \Omega$ is taken as $ \Omega:=(0, 1)$ and  $(t, x)\in   \mathbb{R}_+\times  \Omega.$  Here,  the unknowns $\rho$ and  $u$   denote fluid density  and velocity,  respectively.    The pressure  $p=p(\rho)$ is a given function of density.
The viscosity coefficient $\mu(\rho)$ is  not necessarily  a positive constant,   as it is well known that the viscosity of a gas depends on the temperature   and thus on the density in the current isentropic case.    To cover a wide range of    gas dynamics,   in the sequel of the paper  the pressure law and viscosity coefficient will be given  in the general way that:
\begin{align}
& p(s)\in C^1(\mathbb{R}_+),  ~~ p(s), ~p'(s)\geq 0\andf p~~{\rm is~ convex};\label{pcon}\\
&\mu(s)\in C^1(\mathbb{R}_+),  ~~   \mu_*\leq \mu(s)\leq \mu^*~{\rm for~some~constants}~\mu_*, ~\mu^*>0,\notag\\
&\quad\qquad\quad\qquad \andf  s\,\int_1^s \frac{\mu(\bar s)}{\bar s} \,d\bar s~\quad  {\rm is~ convex}.\label{mucon}
\end{align}

Examples of admissible pressure and viscosity include
$$p(\rho)=C_1 \rho^\gamma\andf  \mu(\rho)=\mu_*+C_2\rho^\beta,   \quad{\rm for~constants} ~C_1>0, \,C_2\geq0, ~~ \gamma\geq 1,\,\beta\geq 0,$$
which corresponds to polytropic perfect gases.

Thanks to the boundary condition \eqref{equ-d},  it is clear that  the total mass and momentum of sufficiently smooth solutions of IBVP \eqref{equ} are conserved through  evolution, namely,  for any  $t>0,$
\begin{equation}\label{Laws}
\int_{ \Omega} \rho(t, x) \, dx=\int_{ \Omega} \rho_0  \, dx=M_0 \andf  \int_ \Omega (\rho u)(t, x) \, dx =\int_ \Omega (\rho_0 u_0)(x) \, dx.
\end{equation}
We shall always assume that
\begin{equation}\label{initialdata-Cond1}
\int_{ \Omega} \rho_0  \, dx=1\andf \int_ \Omega (\rho_0 u_0)(x) \, dx=0.
\end{equation}
Let us denote by $e$  the potential energy of the fluid.   By the relation $\rho e''(\rho)=p'(\rho)$ and the  normalization  $e(1)=e'(1)=0,$  then   $e(\rho)$ is given by
\begin{align*}
e(\rho)=\rho \int_1^{\rho} \frac{p(s)}{s^2} ds- p(1)(\rho -1).
\end{align*}
Hence,  $e$ is a strictly convex function and $\|e\|_{L^1(\Omega)}$ is essentially equivalent to $\|\rho-1\|_{L^2(\Omega)}^2.$ Next,   introduce  the total energy
\begin{equation*}
E(t):=\int_\Omega [ 2 e(\rho)+\rho u^2](t, x) \,dx,
\end{equation*}
one has the following energy balance:  for any $t>0,$
\begin{equation}\label{balance}
E(t)+2\int_0^t\int_\Omega (\mu(\rho) |\pl_x u|^2)(\tau, x) \,dxd\tau=E_0:=E(0).
\end{equation}

 \bigbreak

The compressible Navier-Stokes equations have been extensively investigated since the foundational work of Nash \cite{MR149094}, in which the local existence and uniqueness of classical solutions for isentropic flows in the absence of vacuum (where density is zero) is  established.   Building upon this fundamental local well-posedness result, substantial progress has been achieved in constructing \textit{global-in-time solutions}, encompassing both vacuum and non-vacuum cases, through the development of three distinct frameworks: the strong solution theory, Lions' weak solution theory, and Hoff's weak solution theory with bounded density and small energy, as detailed in the following contributions.

\underline{Strong solution theory.} The global existence of classic or strong solutions with arbitrary large initial data is not known generally. Only the one-dimensional theory is quite satisfactory.
The foundational results on global well-posedness for one-dimensional  flows with large initial data away from vacuum   trace back to the seminal contributions of Kanel$^\prime$ \cite{MR227619} and  Kazhikhov-Shelukhin \cite{MR468593}. For higher dimensional case, initial data are usually required close to constant equilibrium and global small smooth solutions  are constructed based on the corresponding linearized system, see for example Matsumura and Nishida \cite{MR564670}, and also Danchin \cite{MR1779621} for a critical functional framework. Surprisingly, Va\u{\i}gant and Kazhikhov \cite{MR1375428} investigated a class of two-dimensional isentropic compressible Navier-Stokes equations with specified density-dependent viscosity coefficients and provided the existence and uniqueness of global strong solutions with general large initial data.
      Regarding strong solutions with possible vacuum, the problem of local well-posedness for the isentropic case was resolved by Salvi and Stra\u{s}kraba \cite{MR1217657}, followed by contributions from Cho, Choe, and Kim \cite{MR2038120}, and Cho and Kim \cite{MR2223483}. The first proof of global strong solutions for the isentropic compressible Navier-Stokes equations allowing large initial oscillations and   vacuum  was given by Huang, Li, and Xin \cite{MR2877344}. Further developments in this direction were contributed by Li and Xin \cite{MR3923730,MR3597161}.   Improvements in Va\u{\i}gant and Kazhikhov type global large solutions to include vacuum were established in  Jiu-Wang-Xin \cite{MR3247365}.  See also Huang and Li \cite{MR3505779} for more general parameter case.  

 \underline{ Lions' weak solution theory.} Consider the isentropic compressible Navier-Stokes equations with general initial data, Lions \cite{MR1637634} first proved the global existence of finite-energy weak solutions for polytropic perfect gases with $\gamma>\frac{9}{5}$. Subsequent work by Feireisl, Novotn\'{y}, and Petzeltov\'{a} \cite{MR1867887} relaxed this requirement to $\gamma>\frac32$, while Jiang and Zhang \cite{MR2005201} further generalized it to $\gamma>1$ in the axisymmetric case. Later, Feireisl and Novotn\'{y} \cite{MR2499296} generalized the analysis to the case of compressible viscous heat-conducting fluids. For more recent advances in the density-dependent viscosity case, see  Bresch and  Jabin \cite{MR3862947} and references therein.
 Solutions in this very general class possess rather little regularity so that analysis of their detailed qualitative properties seems difficult. In particular, the question of  uniqueness is largely open.

\underline{Hoff's weak solution theory.} It is started from the work \cite{MR896014} of Hoff, in which the global existence of  solutions for one-dimensional isentropic flows with large initial velocity  in $L^2$ and   initial density  in $L^2\cap BV$ is established. Chen, Hoff, and Trivisa \cite{MR1789926} later generalized this result to the non-isentropic case in a bounded domain. See also recent developments in \cite{MR4373171,MR4659290,MR4444077,MR4579720,MR4695791}.
For multi-dimensional case, this type of solutions is limited to have small initial energy and has been constructed by Hoff  \cite{MR1088275,MR1142276}. We also mention its extension to the compressible MHD system in \cite{MR2927617}.
Solutions in this class usually have bounded density, which is physically relevant.  Linearization method is not valid in this class, and solutions may exhibit truly nonlinear and physically interesting effects. For example, it allows piecewise smooth density profiles. Moreover, these solutions have   more regularity properties than Lions' type weak solutions such that a reasonable uniqueness theory is possible, at least for one-dimensional case,   see Danchin, Fanelli, and Paicu \cite{MR4047647}.
For higher dimensional case, the uniqueness issue of Hoff type weak solutions presents significant challenges. Recently, Danchin and Mucha \cite{MR4642822} made important progress in this direction. More precisely, under the assumptions that the initial velocity lies in $H^1(\mathbb{T}^3)$ and initial density is merely bounded and nonnegative, and that bulk viscosity is sufficiently large, they established the global existence  of solutions in the two-dimenisonal case. Moreover, in the case that the  pressure law is linear, the solution is unique.
Similar result concerning existence and uniqueness were also obtained in $\mathbb{T}^3$, either locally in time for general large data or globally under an appropriate scaling-invariant smallness condition on the velocity field (with no smallness restriction on the density). Exponential decay of the corresponding solutions in the two-dimensional torus was proved later by Danchin and Wang \cite{MR4718745}.

To the best of our knowledge, Hoff's weak solution theory is not known for the one-dimensional  isentropic flows with arbitrary large initial data and vacuum. This unsatisfactory gap in the theory motivates the present work.

\subsection{Main results}

In this paper, we   establish the  global  existence,   regularity properties and  decay estimates of finite-energy weak solutions \textit{\`{a} la Hoff} for the IBVP \eqref{equ} with   large rough  initial data in a vacuum.    Throughout the paper,  we use   $\dot v$ and sometimes $\frac{D}{Dt} v $ to denote the material derivative of $v,$ i.e.   $\dot{v}=\pl_t v+u\pl_x v.$

To state our result,  let us first give the definition of weak solutions.
\begin{definition}\label{Def1}
A pair of function $(\rho,u)$ is said to be a Hoff type  global weak solution of system \eqref{equ} provided that $\rho\geq 0$ almost everywhere and
\begin{align*}
  &\rho\in L^\infty(\R_+\times \Omega)\cap \mathcal{C}(\R_+; L^r(\Omega)),~ \sqrt{\rho}u \in L^\infty(\mathbb{R}_+, L^2(\Omega)),\\
  &\partial_x u \in L^2(\mathbb{R}_+\times \Omega),
\end{align*}
and that
\begin{align*}
  \left.\int_\Omega\rho\phi \,dx\right|^{t_2}_{t_1}&=\int_{t_1}^{t_2}\int_\Omega(\rho\pl_\tau\phi+\rho u\pl_x\phi)\,dxdt,\\
  \left.\int_\Omega\rho u\phi \,dx\right|^{t_2}_{t_1}&=\int_{t_1}^{t_2}\int_\Omega\{\rho u(\pl_\tau\phi+  u\pl_x\phi)+(p(\rho)-\mu(\rho)\pl_xu)\pl_x\phi\}\,dxdt,
\end{align*}
for any $t_1$ and $t_2$ satisfying $0<t_1<t_2< \infty$ and test function $\phi\in \mathcal{C}_0^{\infty}(\mathbb{R}_+\times\Omega)$.
\end{definition}

\begin{Theorem}\label{thm1}
Assume that the  pressure $p(\rho)$ and  the viscosity $\mu(\rho)$  satisfy  conditions  \eqref{pcon}--\eqref{mucon}.   Consider any  initial data $(\rho_0, u_0)$ satisfy \eqref{initialdata-Cond1},  boundary condition    $u_0|_{\pl\Omega}=0$ and
\begin{align}\label{initialdata-Cond2}
0\leq \rho_0(x)\leq  \rho^*~~{\rm ~for~ some ~  positive~ constant}~ \rho^*,  \andf  \sqrt{\rho_0}u_0\in L^2( \Omega).
\end{align}
Then,   {\rm IBVP} \eqref{equ} has a  global-in-time weak solution $(\rho, u)$  in the sense of Definition \ref{Def1} that  fulfilling the conservation laws in \eqref{Laws} and
\begin{itemize}
\item Regularity properties:
\begin{align*}
 u(t,  \cdot), ~( \mu(\rho)\partial_x u- p(\rho))(t, \cdot)\in H^1(\Omega), \quad~ (\rho\dot u)(t, \cdot)\in L^2(\Omega)\quad{\rm for}~t>0.
\end{align*}
 In~particular,~ $u(t, \cdot)$~{is~Lipschitz~continuous~instanously};
\item Exponential decay:
\begin{equation}
\|(\rho-1)(t, \cdot)\|_{L^2}+ \| u(t,  \cdot)\|_{L^2}\leq C e^{-\alpha t},  \quad {\rm for}~t>0,
\end{equation}
where   $C$ and $\alpha$ are positive and  depending only on $E_0,  \mu_*,   \mu^*$ and $\rho^*.$
\end{itemize}
Moreover,  if in addition $\partial_x u_0\in L^2(\Omega),$   then $u$ satisfies the additional estimates:
\begin{align*}
  &\partial_x u\in L^{\infty}(0,  T;L^2(\Omega)), \quad \sqrt{t\rho}\dot{u}\in L^{\infty}(0, T;L^2(\Omega)) \\
  &{\rm and}\quad    \mu(\rho)\partial_x u- p(\rho)    , \in L^2(0, T;  H^1(\Omega)), \quad {\rm for ~any}~T>0,
\end{align*}
and furthermore,  this weak solution is unique in the class:
\begin{align*}
  &\rho\in L^{\infty}((0,  T)\times \Omega), \quad \sqrt{\rho} u\in L^{\infty}(0, T;L^2(\Omega)) \\
  &{\rm and}\quad     \partial_x u \in L^1(0, T;  L^\infty(\Omega)),  ~\sqrt{t}\partial_x u\in  L^2(0, T;  L^\infty(\Omega)).
\end{align*}
\end{Theorem}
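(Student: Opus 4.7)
The plan is to construct the solution via approximation. I would mollify the initial data and add $\epsilon > 0$ to the density to obtain smooth data $(\rho_0^\epsilon, u_0^\epsilon)$ bounded away from vacuum, for which classical global well-posedness theory produces smooth solutions $(\rho^\epsilon, u^\epsilon)$. The heart of the proof is to derive a priori bounds uniform in $\epsilon$: (i) the energy balance \eqref{balance}, (ii) a pointwise upper bound $\rho^\epsilon \leq C$ independent of the lower-bound parameter, (iii) a time-weighted control on $\sqrt{\rho^\epsilon}\dot u^\epsilon$ in $L^\infty_t L^2_x$, and (iv) exponential decay. Once these are established, compactness and limit passage finish the existence part, and the extra regularity assumption $\partial_x u_0 \in L^2$ allows a standard $L^2$-difference argument for uniqueness.

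The core estimate is the density upper bound without any density lower bound. I would introduce the effective viscous flux $G := \mu(\rho)\partial_x u - p(\rho)$ and the primitive $\Phi(\rho) := \int_1^\rho \mu(s)/s\, ds$. Combining the mass and momentum equations yields $D_t \Phi(\rho) + G + p(\rho) = 0$, while the boundary condition $G|_{\partial\Omega} = 0$ together with $\partial_x G = \rho \dot u$ gives $G(t,x) = \int_0^x (\rho\dot u)(t,y)\,dy$, so $\|G(t,\cdot)\|_{L^\infty} \lesssim \|\sqrt{\rho}\dot u\|_{L^2}$ by the mass normalization $M_0=1$. Integrating the identity along trajectories and using $p \geq 0$, one bounds $\Phi(\rho(t,\cdot))$ by $\Phi(\rho_0)+\int_0^t \|\sqrt{\rho}\dot u\|_{L^2}\,d\tau$; since $\mu(\rho)\geq \mu_*$ forces $\Phi(\rho)\to\infty$ as $\rho\to\infty$, this inverts to a pointwise bound on $\rho$, and the convexity hypothesis on $s\int_1^s \mu(\bar s)/\bar s\,d\bar s$ is used to close the inversion through a monotonicity argument. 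The remaining $L^1_t L^2_x$ bound on $\sqrt{\rho}\dot u$ is obtained by testing the momentum equation with $\dot u$ and invoking the commutator identity $\partial_x \dot u = D_t(\partial_x u) + (\partial_x u)^2$; for merely $\sqrt{\rho_0}u_0 \in L^2$ I would weight this estimate with a factor of $t$ as in Hoff's theory to absorb the initial singularity, which yields instantaneous Lipschitz regularity of $u$.

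Exponential decay follows from the energy balance, Poincar\'e's inequality on $u$ (vanishing at the boundary) and on $\rho-1$ (which has zero mean), and the density upper bound, yielding an inequality of the form $\tfrac{d}{dt} E + \alpha E \leq 0$. Passing to the limit $\epsilon \to 0^+$ uses weak-$\ast$ compactness in $L^\infty$ for $\rho^\epsilon$, strong $L^p_{t,x}$ convergence via Aubin--Lions applied to the mass equation, and the one-dimensional structure, which avoids needing Lions' effective-viscous-flux compactness. Under the extra hypothesis $\partial_x u_0\in L^2$, the Hoff estimate runs without the $t$-weight, producing $\sqrt{\rho}\dot u \in L^\infty_t L^2_x$ and hence $\partial_x u\in L^1_t L^\infty_x$; uniqueness then follows from an $L^2$-difference estimate for two solutions coupled with Gr\"onwall's lemma, using the Lipschitz character of the velocity. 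The main obstacle is the coupling in step (ii): the classical Kazhikhov--Shelukhin argument relies on a positive lower bound for $\rho$, which is unavailable here, so the density upper bound and the material-derivative estimate must be closed self-consistently via a continuation/bootstrap argument that never uses a lower bound on the density.
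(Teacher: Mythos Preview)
Your overall plan (regularize, derive uniform a priori bounds, pass to the limit) matches the paper, but three of your key steps diverge from what the paper actually does, and two of them contain real gaps.

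\textbf{Density upper bound.} You write $D_t\Phi(\rho)+G+p=0$ and then control $G$ via $\|G\|_{L^\infty}\lesssim\|\sqrt{\rho}\dot u\|_{L^2}$, which forces you into the bootstrap you flag at the end. The paper avoids this circularity entirely: instead of representing $G$ through $\rho\dot u$, it uses $G=\partial_t(\partial_x^{-1}(\rho u))+\rho u^2$ and observes that the combination $\mathcal{U}(\rho)+\partial_x^{-1}(\rho u)$ satisfies
\[
D_t\bigl[\mathcal{U}(\rho)+\partial_x^{-1}(\rho u)\bigr]=-p(\rho)\leq 0.
\]
Since $|\partial_x^{-1}(\rho u)|\leq\|\sqrt{\rho}\|_{L^2}\|\sqrt{\rho}u\|_{L^2}\leq\sqrt{E_0}$ by mass conservation and the energy balance alone, the density bound follows \emph{directly} from Lemma~\ref{lemma1}, with no $\dot u$ estimate needed at all. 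Your bootstrap might close, but the $\dot u$ bounds you would feed into it (Lemmas~\ref{lemma3}--\ref{lemma4}) depend on $T$ and on $\|\rho\|_{L^\infty}$ in a way that is not obviously sublinear, so closing it globally in time is nontrivial; the paper's route is both simpler and immediately global.

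\textbf{Exponential decay.} Your argument ``Poincar\'e on $\rho-1$ (zero mean)'' cannot work: Poincar\'e would require control of $\partial_x\rho$, which is never available for merely bounded densities. The energy balance dissipates only $\partial_x u$, which controls $\|\sqrt{\rho}u\|_{L^2}^2$ but not $\|e(\rho)\|_{L^1}$ pointwise in time. The paper builds an augmented Lyapunov functional $E+\eta\int_\Omega\partial_x^{-1}(\rho u)\,(\rho-1)\,dx$; differentiating the cross term and using $p=-\partial_t(\partial_x^{-1}(\rho u))-\rho u^2+\mu(\rho)\partial_x u$ produces the missing $-c\|\rho-1\|_{L^2}^2$ damping needed to close $\frac{d}{dt}\mathfrak{E}+\alpha\mathfrak{E}\leq 0$.

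\textbf{Compactness of density.} Your claim that the one-dimensional structure ``avoids Lions' effective-viscous-flux compactness'' is not what the paper does, and Aubin--Lions on the mass equation alone fails since $\rho^\epsilon$ carries no spatial regularity. The paper first proves strong $L^2_{t,x}$ convergence of $G^\epsilon$ (via Aubin--Lions, using $\partial_xG^\epsilon=\rho^\epsilon\dot u^\epsilon$ and $\partial_tG^\epsilon\in L^2_tH^{-1}_x$), and then applies DiPerna--Lions renormalization together with the convexity of $s\mapsto s\,\mathcal{U}(s)$ and $s\mapsto s\,p(s)$ (this is where hypotheses \eqref{pcon}--\eqref{mucon} are used) to upgrade weak to strong convergence of $\rho^\epsilon$. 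This is precisely the Lions--Feireisl mechanism, specialized to one dimension.

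For uniqueness the paper works in Lagrangian coordinates rather than by an Eulerian $L^2$-difference estimate; either route is viable given $\partial_x u\in L^1_tL^\infty_x$ and $\sqrt{t}\,\partial_x u\in L^2_tL^\infty_x$.
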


\medskip

As an application of Theorem \ref{thm1},    we are able to demonstrate quantitatively the dynamics of two typical vacuum states: density patch and vacuum bubble,  which are physically interesting.
Before presenting our corollary,  let us  review some known results for the one-dimensional vacuum free boundary problem.

\underline{Constant viscosity case.} When the viscosity coefficient $\mu$ is constant, \cite{MR1117422} demonstrated the failure of continuous dependence on initial data for Navier-Stokes solutions with vacuum. For the free boundary problem with one fixed boundary and one vacuum interface, \cite{MR981519} established the existence of global weak solution. Similar results for spherically symmetric viscous gases were obtained in \cite{MR1227730}. Further analysis of solution regularity and asymptotic behavior near vacuum interfaces was developed in \cite{MR1766564}.

  \underline{Density-dependent viscosity ($\mu(\rho)=\rho^\lambda$) with continuous vacuum connection.} Under boundary conditions $\rho(t,a(t)) = \rho(t,b(t)) = 0$ (as opposed to the stress-free condition $(p(\rho) - \mu(\rho)\partial_x u)(t,a(t)) = 0$), \cite{MR1929151} proved the local existence of weak solutions for $\frac{1}{2} < \lambda \leq \gamma - \frac{1}{3}$. Global existence for $0<\lambda<\frac{2}{9}$ was first shown in \cite{MR1936794}, and later extended to wider $\lambda$ ranges in \cite{MR2237707, MR2644149} and the references therein.

\underline{Density-dependent viscosity ($\mu(\rho)=\rho^\lambda$)  with  discontinuous vacuum connection.} For discontinuous initial density connecting to vacuum, \cite{MR1485360} and \cite{MR882389} studied the local existence of weak solutions under the stress-free condition $(p(\rho)-\mu(\rho)\partial_x u)(t, a(t)) =0.$ Global existence and uniqueness results appeared in \cite{MR2254008,MR1980822,MR2373221,MR1843291,MR2563807} and the references therein.

Concerning the large time behavior of the weak solution to the free boundary problem of system \eqref{equ-a}-\eqref{equ-b} with constant viscosity, Luo, Xin and Yang \cite{MR1766564} obtained
the following interface behavior:
\begin{equation*}
  c_1\rho_0(x_1)(1+c_2\rho_0^{\gamma}(x_1)t)^{-1/\gamma}\leq \rho(t,x)\leq c_3\rho_0(x_1)(1+c_4\rho_0^{\gamma}(x_1)t)^{-1/\gamma},
\end{equation*}
and
\begin{equation*}
  c_5(1+t)^{1/\gamma}\leq b(t)-a(t)\leq c_6(1+t)^{1/\gamma}.
\end{equation*}
For density dependent viscosity $\mu(\rho)=\rho^{\lambda}$, Zhu \cite{MR2563807} showed that the density function tends to zero in the following way, when the initial density connects to vacuum with discontinuities
\begin{equation*}
  \rho\leq \frac{C}{(1+t)^{\kappa}},
\end{equation*}
and
\begin{equation*}
  b(t)-a(t)\geq C(1+t)^{\kappa},
\end{equation*}
where $\kappa$ depends on $\lambda>0$ and $\gamma>1$, $C$ is a positive constant depending on the initial data but independent of time. For the case that the initial density connects to vacuum with discontinuities, similar algebraic decay estimates can be found in \cite{MR2644149,MR3926039} and the references therein.

In contrast to the aforementioned vacuum free boundary problems in unbounded domain, where the asymptotic behavior of the interface typically exhibits algebraic rates, our analysis provides a quantitative description of the vacuum interface motion with fundamentally different dynamical characteristics. Specifically, by exploiting the exponential decay estimates established in Theorem \ref{thm1}, we are able to demonstrate that the vacuum states in a bounded domain are being compressed exponentially fast.   See Figure \ref{fig:outward} and Figure \ref{fig:inward} below for illustrations.

\begin{Corollary}\label{cor1}
Let $0<a_0<b_0<1.$ Consider any global solution $(\rho, u)$ that obtained from Theorem \ref{thm1}  supplemented with initial density given by
\begin{itemize}
\item A density patch  $\rho_0(x)=\frac{1}{b_0-a_0}{\bf 1}_{\{ a_0<x< b_0\}}(x).$  Then the vaccum boundary points $a(t),~b(t)$ defined  in \eqref{ODE} satisfying
\begin{align}\label{cor-est1}
   |a(t)|\leq Ce^{-\alpha t} \andf |1-b(t)|\leq Ce^{-\alpha t}.
\end{align}
\item A vacuum bubble    $\rho_0(x)=  \frac{1}{1+a_0-b_0} {\bf 1}_{\{0\leq x<a_0\,\cup \,b_0<x\leq1\}}.$   Then the vaccum boundary points $a(t),~b(t)$ defined in  \eqref{ODE} satisfying
\begin{align}\label{cor-est2}
  |a(t)-x^\infty|\leq Ce^{-\alpha t} \andf |b(t)-x^\infty|\leq Ce^{-\alpha t},
\end{align}
with $x^\infty=\int_0^{a_0}\rho_0(x)\,dx.$
\end{itemize}
\end{Corollary}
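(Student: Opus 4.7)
The plan is to track the vacuum interfaces as particle paths of the Lagrangian flow and exploit the exponential $L^2$-decay of $\rho-1$ from Theorem \ref{thm1}. Since Theorem \ref{thm1} asserts, for every $t>0$, that $u(t,\cdot)$ is Lipschitz on $\overline{\Omega}$ and vanishes on $\partial\Omega$, the ODE $\dot X(t)=u(t,X(t))$ generates a continuous, orientation-preserving flow $X(t,\cdot)\colon\overline{\Omega}\to\overline{\Omega}$ that fixes the endpoints, and $a(t):=X(t,a_0)$ and $b(t):=X(t,b_0)$ are the particle paths defined in \eqref{ODE}. Differentiating in $t$ and using the continuity equation \eqref{equ-a} together with $u|_{\partial\Omega}=0$ yields the mass transport identity
\begin{equation*}
\int_0^{X(t,y)}\rho(t,x)\,dx=\int_0^{y}\rho_0(\bar x)\,d\bar x,\qquad y\in[0,1],\ t\ge 0,
\end{equation*}
from which, in particular, the vacuum set of $\rho_0$ is transported onto the vacuum set of $\rho(t,\cdot)$.

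For the density patch, setting $y=a_0$ above gives $\int_0^{a(t)}\rho\,dx=0$, whence
\begin{equation*}
a(t)=\int_0^{a(t)}\bigl(1-\rho(t,x)\bigr)\,dx\le\sqrt{a(t)}\,\|\rho(t,\cdot)-1\|_{L^2(\Omega)}.
\end{equation*}
Combined with Theorem \ref{thm1}, this produces $\sqrt{a(t)}\le Ce^{-\alpha t}$, which is already stronger than \eqref{cor-est1}; the bound on $|1-b(t)|$ follows by the symmetric argument on $(b(t),1)$. For the vacuum bubble, the transport identity with $y=a_0$ reads $\int_0^{a(t)}\rho\,dx=x^\infty$, so
\begin{equation*}
|a(t)-x^\infty|=\left|\int_0^{a(t)}\bigl(\rho-1\bigr)\,dx\right|\le\sqrt{a(t)}\,\|\rho-1\|_{L^2(\Omega)}\le Ce^{-\alpha t},
\end{equation*}
and the bound on $|b(t)-x^\infty|$ follows analogously from $\int_{b(t)}^{1}\rho\,dx=1-x^\infty$.

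The only delicate point I anticipate is justifying the mass transport identity and the propagation of the vacuum set at the regularity level of the weak solutions produced by Theorem \ref{thm1}. Since $u$ is instantaneously Lipschitz in space and $\rho$ is bounded, a DiPerna--Lions renormalization argument, or equivalently a direct mollification of \eqref{equ-a} using the $L^\infty$ bound on $\rho$ and the $L^2$ bound on $\partial_x u$, shows that $\rho$ is transported by the flow of $u$, so that zero sets are preserved along particle paths. Once this transport property is secured, the conclusion reduces to the elementary Cauchy--Schwarz estimates above combined with the exponential decay already provided by Theorem \ref{thm1}.
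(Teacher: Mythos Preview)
Your proposal is correct and follows essentially the same route as the paper: both arguments track the vacuum interfaces as Lagrangian trajectories, establish the mass transport identity $\int_0^{a(t)}\rho(t,x)\,dx=\int_0^{a_0}\rho_0\,dx$ (the paper verifies this by direct differentiation in $t$), and then convert the difference $a(t)-x^\infty$ into an integral of $\rho-1$ which is controlled by $\|\rho-1\|_{L^2}$ and the decay from Theorem~\ref{thm1}. Your use of Cauchy--Schwarz on the subinterval $(0,a(t))$ rather than on all of $\Omega$ even yields the slightly sharper bound $\sqrt{a(t)}\le Ce^{-\alpha t}$ in the density-patch case, but this is a cosmetic refinement of the same idea.
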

 \bigbreak

\begin{figure}[htbp]
 \centering
    \begin{minipage}{0.48\textwidth}
        \centering
        \begin{tikzpicture}[scale=0.8]
\draw[->] (-1,0)--(0,0)node[below left]{\tiny 0 } --(2,0) node[below]{\tiny $a(t)$ }--(4,0) node[below]{\tiny $b(t)$ }--(6,0) node[below left]{\tiny 1 }    -- (6.5,0) node[below]{\tiny $ x$};
\draw[->] (0,-1)--(0,1)node[left]{\tiny 1} --(0,3)node[left]{\tiny $\frac{1}{b_0-a_0}$}  --(0,3.5) node[right]{\tiny $\rho_0$};
\draw[->] (2,1.5)--(1.5,1.5);
\draw[->] (4,1.5)--(4.5,1.5);
\draw (0,0) circle [radius=1.5pt];
\draw (0,1) circle [radius=1.5pt];
\draw (6,1) circle [radius=1.5pt];
\draw (2,3) circle [radius=1.5pt];
\draw (4,3) circle [radius=1.5pt];
\draw[dashed] (0, 1) -- (6, 1);
\draw[thin] (6, 4) -- (6,-1);
\draw[dashed] (2, 3) -- (2, 0);
\draw[dashed] (4, 3) -- (4, 0);
\draw[thick] (2, 3)--(4, 3);
\draw[red,dashed] (2, 3) to  (4, 0);
\draw[red,dashed] (2, 1.5) to  (3, 0);
\draw[red,dashed] (3, 3) to  (4, 1.5);
\draw[blue,dashed] (1, 1) to  (0, 0);
\draw[blue,dashed] (2, 1) to  (1, 0);
\draw[blue,dashed] (3, 1) to  (2, 0);
\draw[blue,dashed] (4, 1) to  (3, 0);
\draw[blue,dashed] (5, 1) to  (4, 0);
\draw[blue,dashed] (6, 1) to  (5, 0);
\end{tikzpicture}
\caption{Density patch}
\label{fig:outward}
    \end{minipage}
    \hfill
    \begin{minipage}{0.48\textwidth}
        \centering
        \begin{tikzpicture}[scale=0.8]
\draw[->] (-1,0)--(0,0)node[below left]{\tiny 0 } --(2,0) node[below]{\tiny $a(t)$ }--(3,0) node[below]{\tiny $x^\infty$ }--(4,0) node[below]{\tiny $b(t)$ }--(6,0) node[below left]{\tiny 1 }    -- (6.5,0) node[below]{\tiny $ x$};
\draw[->] (0,-1)--(0,1)node[left]{\tiny 1}--(0,3)node[left]{\tiny $\frac{1}{1+a_0-b_0}$}  --(0,3.5) node[right]{\tiny $\rho_0$};
\draw[->] (2,1.5)--(2.5,1.5);
\draw[->] (4,1.5)--(3.5,1.5);
\draw[fill] (0,3) circle [radius=1.5pt];
\draw[fill] (6,3) circle [radius=1.5pt];
\draw (2,3) circle [radius=1.5pt];
\draw (3,1) circle [radius=1.5pt];
\draw (4,3) circle [radius=1.5pt];
\draw[fill] (0,1) circle [radius=1.5pt];
\draw[fill] (6,1) circle [radius=1.5pt];
\draw[dashed] (0, 1) -- (6, 1);
\draw[thin] (6, 4) -- (6,-1);
\draw[dashed] (2, 3) -- (2, 0);
\draw[dashed] (4, 3) -- (4, 0);
\draw[thick] (0, 3)--(2, 3);
\draw[thick] (4, 3)--(6, 3);
\draw[red,dashed] (0, 3) to  (2, 0);
\draw[red,dashed] (0, 1.5) to  (1, 0);
\draw[red,dashed] (1, 3) to  (2, 1.5);
\draw[red,dashed] (4, 3) to  (6, 0);
\draw[red,dashed] (4, 1.5) to  (5, 0);
\draw[red,dashed] (5, 3) to  (6, 1.5);
\draw[blue,dashed] (1, 1) to  (0, 0);
\draw[blue,dashed] (2, 1) to  (1, 0);
\draw[blue,dashed] (3, 1) to  (2, 0);
\draw[blue,dashed] (4, 1) to  (3, 0);
\draw[blue,dashed] (5, 1) to  (4, 0);
\draw[blue,dashed] (6, 1) to  (5, 0);
\draw[black,dashed, line width=1pt] (3, 1) to  (3, 0);
\end{tikzpicture}
\caption{Vacuum bubble}
\label{fig:inward}
    \end{minipage}
\end{figure}

 Let us make a few remarks on Theorem \ref{thm1}.
\begin{Remark}\ \

 \begin{itemize}
 \item[1.] A major difficulty on the proof of Theorem \ref{thm1} is the lack of regularity of the density, which also highlights our result on the other side.
Moreover,  the case that the initial density contains a vacuum gives an extreme difficulty in obtaining robust compactness property for the density. Note that  Hoff's original Lagrangian type strategy \cite{HOff98} for proving strong convergence of density fails if there is a vacuum, because he  use essentially the condition that density is far away from vacuum (it is satisfied initially) so that the Jacobian of particle trajectories is non-degenerate.  Our strategy is quite different, it replies on  a global-in-time a priori estimate (see Lemma \ref{lemma3} and also Lemma \ref{lemma4})  for \eqref{equ} that valid for any $H^1$  initial velocity and bounded initial density not necessarily strictly positive.
More precisely,  we observe that Hoff type weak solutions are ``strong solutions" instanously  after time evolution, even in the presence of vacuum. With those additional regularity properties of the velocity, we finally obtain the global existence and (conditional) uniqueness  of Hoff type weak solutions.

 \item[2.]  The dependence of viscosity on density makes it hard to trace the trajectory of vacuum state. In fact,    degenerate   viscosity  coefficient may even  lead to singularity formation for \eqref{equ}.  For example,  in  \cite{MR2410901} it demonstrates that any possible vacuum state vanishes in finite time and the velocity blows up in finite time if a vacuum state appears initially.     Our   Theorem \ref{thm1}  excludes formation of singularity  for general Hoff type weak solutions with vacuum  whenever the   density-dependent viscosity is not degenerate.  Actually, our argument is still valid even if
the  viscosity function $\mu(s)$ vanishes as $s$ tends to zero, in the case that initial densities are far away from vacuum.
     Moreover, if $\mu\equiv {\rm Constant},$ exponential time decay of $\partial_x u$ is obtained in Proposition \ref{decay2}.

    \item[3.] The regularity assumption $\partial_x u_0\in L^2$ in the uniqueness statement is not essential, it should be possible to have less regular  initial velocities,  as in \cite{MR4695791}. We do not pursue in this direction in the present paper.

     \item[4.] The boundary conditions in \eqref{equ-d} are used frequently in order to have a representation  of the pressure, this plays an important role in our analysis.    We leave the  domain cases $\mathbb{T}$ and $\mathbb{R}$ for future work.
 \end{itemize}
\end{Remark}

\bigbreak

 \subsection*{Notations} Throughout the paper,  we use  $\partial_x^{-1} f$ to denote the integral  $\int_0^x f( y) dy$ for $x\in(0,1).$
 We sometimes use $f'(s)$ to represent derivative of $f$ on $s.$   Note that $C(a, b, c, \cdots)$ denotes  a generic constant depending on $a, b, c, \cdots$ in the rest of this paper.   By $\|\cdot\|_{L^q}$, we mean $q-$power Lebesgue norms over $\Omega$.  We denote by $H^s$ the Sobolev space.


\section{A priori estimates}\label{S2}
 In this section,   we will establish some necessary a priori bounds for smooth solutions away from vacuum for IBVP \eqref{equ}.   A key point of  these estimates is that they are independent of the lower bound of density.

\subsection{Boundedness of density}
At first,   we provide a proof of the energy balance \eqref{balance}.
\begin{Lemma}\label{lemma1}
Consider $(\rho,  u)$ with $\rho>0$ a smooth solution  of  problem \eqref{equ}  defined on $[0, T]\times \Omega$ for some $T>0.$   Then for any $t\in [0, T],$ we have
\begin{align}
\int_\Omega [e(\rho)+\rho u^2 ](t, x)dx +\int_0^t \int_\Omega |\sqrt{\mu(\rho)}\pl_x u|^2(s, x) dx ds \leq C(\rho^*, E_0), \label{lem310}
\end{align}
where $C(\rho^*, E_0)$ is a positive constant depending on $ \rho^*$ and $E_0.$
\end{Lemma}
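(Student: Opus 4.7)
The statement is essentially the energy identity \eqref{balance} combined with the positivity of the potential energy $e$, so the plan is to derive \eqref{balance} rigorously for the smooth, vacuum-free solution at hand and then read off the claimed inequality.

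The first step is to rewrite the momentum equation \eqref{equ-b} in the equivalent non-conservative form $\rho\dot u+\partial_x p(\rho)=\partial_x(\mu(\rho)\partial_x u)$, which follows from \eqref{equ-a}. Multiplying by $u$ and integrating over $\Omega$, the inertial piece collapses to
\[
\int_\Omega \rho u\dot u\,dx=\tfrac12\frac{d}{dt}\int_\Omega \rho u^2\,dx,
\]
after using the continuity equation and discarding a boundary term via $u|_{\partial\Omega}=0$. Integrating the viscous term by parts produces $-\int_\Omega \mu(\rho)|\partial_x u|^2\,dx$, again thanks to the Dirichlet condition. Thus the identity reduces to
\[
\tfrac12\frac{d}{dt}\int_\Omega \rho u^2\,dx-\int_\Omega p(\rho)\,\partial_x u\,dx+\int_\Omega \mu(\rho)|\partial_x u|^2\,dx=0.
\]

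The second step is to convert the pressure–work term into the time derivative of $\int_\Omega e(\rho)\,dx$. Using $\rho e''(\rho)=p'(\rho)$ and $e(1)=e'(1)=0$, a short calculation gives $\rho e'(\rho)-e(\rho)=p(\rho)-p(1)$. Combining this algebraic identity with \eqref{equ-a} yields the transport–type relation
\[
\partial_t e(\rho)+\partial_x\bigl(e(\rho)u\bigr)+\bigl(p(\rho)-p(1)\bigr)\partial_x u=0.
\]
Integrating over $\Omega$ and exploiting $u|_{\partial\Omega}=0$ (so that $\int_\Omega\partial_x u\,dx=0$) turns this into $\frac{d}{dt}\int_\Omega e(\rho)\,dx=-\int_\Omega p(\rho)\partial_x u\,dx$. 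Plugging back into the previous display and multiplying by $2$ produces the energy balance \eqref{balance}.

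The third step is to extract the claimed bound. Convexity of $e$ together with $e(1)=e'(1)=0$ gives $e(\rho)\geq 0$, so \eqref{balance} immediately implies
\[
\int_\Omega\bigl[e(\rho)+\rho u^2\bigr](t,x)\,dx+\int_0^t\!\!\int_\Omega \mu(\rho)|\partial_x u|^2\,dx\,d\tau\leq E_0.
\]
Finally, the initial total energy $E_0$ is controlled by the data: since $0\leq\rho_0\leq\rho^*$, continuity of $e$ on $[0,\rho^*]$ gives $\int_\Omega e(\rho_0)\,dx\leq \max_{[0,\rho^*]}e<\infty$, while $\int_\Omega \rho_0 u_0^2\,dx=\|\sqrt{\rho_0}u_0\|_{L^2}^2$ is finite by \eqref{initialdata-Cond2}, so $E_0\leq C(\rho^*,E_0)$. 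There is no genuine obstacle here; the only place one must be attentive is verifying the identity $\rho e'(\rho)-e(\rho)=p(\rho)-p(1)$ used to pass from the pressure work to the time derivative of potential energy — everything else is routine integration by parts justified by the smoothness and strict positivity assumed on $(\rho,u)$.
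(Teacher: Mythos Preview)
Your proof is correct and follows essentially the same route as the paper: multiply the continuity equation by $e'(\rho)$ (which is precisely what produces your transport identity for $e(\rho)$ via $\rho e'(\rho)-e(\rho)=p(\rho)-p(1)$), multiply the momentum equation by $u$, integrate by parts using $u|_{\partial\Omega}=0$, and add. The paper compresses all of this into one line, whereas you have spelled out the intermediate identities; there is no substantive difference in method.
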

{\it Proof.} Multiply \eqref{equ-a} and \eqref{equ-b} by $e'(\rho)$ and $u$,respectively, and integrate with respect to space variable over $I$ to obtain
\begin{align}
\frac{1}{2}\frac{d}{dt}\int_\Omega [ 2 e(\rho)+\rho u^2] (t, x) dx +\int_\Omega  |\sqrt{\mu(\rho)}\pl_x u|^2(t, x)  dx=0,
\end{align}
where we used integration by parts and boundary conditions in \eqref{equ-d}.  Then we  integrate the above equality with respect to  time variable  to obtain \eqref{balance}.
 Because $e(\cdot)$ is continuous and $\rho_0$ is bounded,   we obtain \eqref{lem310} consequently.   \hfill  $\Box$

\begin{Lemma}\label{lemma2}
Assume that $(\rho, u)$  with $\rho>0$ is a smooth solution  of  problem \eqref{equ} defined on $[0, T]\times \Omega$ for some $T>0.$     We have the following upper bound for density:
\begin{align}\label{upperbdd}
\sup_{t\in [0, T]}\|\rho(t, \cdot)\|_{L^\infty}\leq C(E_0,   \rho^*, \mu_*,   \mu^*).
\end{align}
\end{Lemma}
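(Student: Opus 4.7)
The plan is to exploit the 1D effective viscous flux
$$F := \mu(\rho)\pl_x u - p(\rho),$$
which vanishes on $\pl\Omega$ thanks to \eqref{equ-d}, together with an exact representation of $\int_0^t F\, ds$ along particle trajectories. Set $\phi(\rho) := \int_1^\rho \mu(s)/s\, ds$, which is strictly increasing and satisfies $\phi(\rho) \geq \mu_*\log\rho$ for $\rho \geq 1$. Using the continuity equation \eqref{equ-a} to get $\dot\rho = -\rho\pl_x u$ and substituting $\pl_x u = (F+p(\rho))/\mu(\rho)$ yields the pointwise identity
$$\frac{D}{Dt}\phi(\rho) = -F - p(\rho)$$
along the flow $X'(s) = u(s,X(s))$. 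Since $p \geq 0$, the task reduces to a uniform-in-time bound on $\int_0^t F(s, X(s))\, ds$.

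The key step is to introduce the primitive $\omega(t,x) := \int_0^x (\rho u)(t,y)\, dy$. Rewriting \eqref{equ-b} as $\pl_t(\rho u) + \pl_x(\rho u^2) = \pl_x F$ and integrating from $0$ to $x$, the two boundary conditions $u(t,0)=0$ and $F(t,0)=0$ kill the left-endpoint contributions, giving
$$F = \pl_t \omega + \rho u^2.$$
Composing with the flow and using $\pl_x \omega = \rho u$, the right-hand side becomes a total derivative: $F(s,X(s)) = \tfrac{d}{ds}\omega(s,X(s))$. Hence the integral telescopes to $\omega(t,X(t)) - \omega_0(X(0))$, and Cauchy--Schwarz plus \eqref{lem310} yield
$$|\omega(t,x)| \leq \Big(\int_\Omega \rho\, dy\Big)^{1/2}\Big(\int_\Omega \rho u^2\, dy\Big)^{1/2} \leq \sqrt{M_0 E_0}$$
uniformly in $(t,x)$.

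Combining these pieces gives $\phi(\rho(t,X(t))) \leq \phi(\rho^*) + 2\sqrt{M_0 E_0}$, and the lower bound $\phi(\rho)\geq\mu_*\log\rho$ then produces \eqref{upperbdd} with a constant depending only on $E_0,\rho^*,\mu_*,\mu^*$. Crucially, the identity $D\phi(\rho)/Dt = -F - p(\rho)$ contains no division by $\rho$, so the estimate is intrinsically independent of any positive lower bound on the density, as the lemma requires.

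The main difficulty is structural rather than computational: everything hinges on the fact that the stress-free condition $F|_{\pl\Omega}=0$ and the no-slip condition $u|_{\pl\Omega}=0$ align so that $\int_0^t F\, ds$ along trajectories telescopes against the uniformly bounded primitive $\omega$. Without this alignment one would only get a logarithmic-in-time bound on $\|\rho\|_{L^\infty}$ via standard effective-flux manipulations, which would be too weak for the later uniform-in-time and decay estimates; conversely, allowing vacuum ($\rho\to 0$) would obstruct the usual Lagrangian arguments that divide by $\rho$, which is precisely what the $\phi$-substitution circumvents here.
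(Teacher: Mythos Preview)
Your argument is correct and is essentially identical to the paper's proof: your $\phi$ is the paper's $\mathcal{U}$, your $\omega$ is the paper's $\pl_x^{-1}(\rho u)$, and your two identities $\frac{D}{Dt}\phi(\rho)=-F-p$ and $\frac{D}{Dt}\omega=F$ sum to the paper's single transport identity $\frac{D}{Dt}[\mathcal{U}(\rho)+\pl_x^{-1}(\rho u)]=-p(\rho)$, after which both proofs conclude in the same way via $p\geq 0$, the Cauchy--Schwarz bound on $\omega$, and $\mathcal{U}(\rho)\geq\mu_*\log\rho$ for $\rho\geq 1$.
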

{\it Proof.}
Given  that $\rho(t, x)$  is smooth and  positive,  we can rewrite \eqref{equ-a} as
\begin{align}
\pl_t \ln \rho +u\pl_x \ln \rho+\pl_x u=0. \label{lem1}
\end{align}
It follows from equation \eqref{equ-b} and boundary condition   \eqref{equ-d} that
\begin{align}
\mu(\rho) u_x-\rho u^2-p(\rho) =\pl_t(\pl_{x}^{-1} (\rho u)).\label{lem2}
\end{align}
Recall that  $\pl_{x}^{-1} (\rho u)$ denotes the integral  $\int_0^x (\rho u)(t, y) dy$ for $x\in(0,1).$
We then combine \eqref{lem1} and \eqref{lem2} to write
\begin{align}
&\pl_t\lt[ \mathcal{U}(\rho)  + \pl_{x}^{-1} (\rho u)\rt]+u\pl_x
\lt[ \mathcal{U}(\rho)  + \pl_{x}^{-1} (\rho u)\rt]=-p(\rho)
\end{align}
with \begin{equation}\label{def-mu}
\mathcal{U}(\rho)=\int_1^\rho \frac{\mu(s) }{s}ds.
\end{equation}
Next, we define a Lagrangian trajectory $X(t,  y)$ associated with  $u$  by
$$\frac{d}{dt}X(t,   y)= u(t, X(t, y)),\ \ X(t,  y)|_{t=0}=y.$$
Then we obtain from equation \eqref{lem2} and the  fact that the pressure is nonnegative  that
\begin{align*}
 \lt[ \mathcal{U}(\rho)  + \pl_{x}^{-1}(\rho u)\rt](t, X(t,  y))=&
\lt[ \mathcal{U}(\rho) + \pl_{x}^{-1}(\rho u)\rt](0, x)-\int_0^t p(\rho)(s, X(s,   y))ds\\
&\leq  \mathcal{U}(\rho_0) + \pl_{x}^{-1}(\rho_0 u_0).\end{align*}
Now,  if $\rho\leq 1$ for all $(t,  x)\in[0, T]\times \Omega,$ then we get \eqref{upperbdd}.   Otherwise,   we infer from   condition \eqref{mucon} that
\begin{align}\label{rho-rho0}
\mu_* \ln\rho(t, x)\leq  \mathcal{U}(\rho)  &\leq      \mathcal{U}(\rho_0) + \pl_{x}^{-1}(\rho_0 u_0)+\|\pl_{x}^{-1}(\rho u)\|_{L^\infty}\notag\\
 &\leq     \mu^* \ln{\rho^*}+ \|\sqrt{\rho_0}u_0\|_{L^2}\|\sqrt{\rho_0}\|_{L^2}+  \|\sqrt{\rho}u(t, \cdot)\|_{L^2}\|\sqrt{\rho(t, \cdot)}\|_{L^2}\notag\\
  &\leq    \mu^* \ln{\rho^*}+ 2E_0,
\end{align}
where  in the last inequality we used conservation of mass and energy balance \eqref{balance}.
Hence,   we obtained \eqref{upperbdd}.   The proof of this lemma is completed.
 \hfill $\Box$

\subsection{Lipschitz continuity of velocity for regular data} Here,   we focus on the first-order spatial derivative estimate for the velocity that   requires only  a boundedness of density.  To achieve it,  we need to introduce  the so-called effect viscous flux
$$G:=\mu(\rho)\partial_x u- p(\rho).$$

\begin{Lemma} \label{lemma3}
Consider  a smooth solution $(\rho, u)$ with $\rho>0$ defined on $[0, T]\times \Omega$   of  problem \eqref{equ}.
Then we have
\begin{multline}\label{lem311}
 \sup_{[0, T]}\| \pl_x u(t, \cdot)\|^2_{L^2} +\int_0^T \|\pl_x u(t, \cdot )\|^2_{L^\infty}\,dt +\int_0^T\lt( \|(\sqrt{\rho} \dot{u})(t, \cdot)\|^2_{L^2}+\|\partial_x G(t, \cdot)\|_{L^2}^2\rt)\, dt\\
 \leq  C(\|\pl_x u_0\|_{L^2}^2,    E_0,  \rho^*,  \mu_*^{-1}, \mu^*, T).
\end{multline}
\end{Lemma}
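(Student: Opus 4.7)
The plan is to adapt the classical Hoff--type energy estimate for the material derivative to the current vacuum-allowed setting. The boundary condition in \eqref{equ-d} gives that the effective flux $G:=\mu(\rho)\partial_x u - p(\rho)$ vanishes on $\partial\Omega$, so the momentum equation collapses to $\rho\dot u = \partial_x G$. Testing against $\dot u$ in $L^2(\Omega)$ (which also vanishes on $\partial\Omega$ because $u$ does), integrating by parts, expanding $\partial_x\dot u = \partial_t\partial_x u + (\partial_x u)^2 + u\partial_x^2 u$, and rewriting time derivatives of $\mu(\rho)$ and $p(\rho)$ via the continuity equation in the form $\partial_t f(\rho) = -f'(\rho)\partial_x(\rho u)$, I expect the dangerous $\mu'(\rho)\rho_x$ and $p'(\rho)\rho_x$ contributions, for which no pointwise bound is available, to telescope pairwise. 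What survives is an identity of the form
\begin{equation*}
\frac{d}{dt}\left[\tfrac12\int \mu(\rho)(\partial_x u)^2\,dx - \int p(\rho)\partial_x u\,dx\right] + \int \rho|\dot u|^2\,dx = -\tfrac12\int [\mu(\rho)+\rho\mu'(\rho)](\partial_x u)^3\,dx + \int p'(\rho)\rho(\partial_x u)^2\,dx.
\end{equation*}

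The main obstacle is the cubic term, for which we have no smallness hypothesis. The crucial observation is that, because $G|_{\partial\Omega}=0$, one-dimensional Poincar\'e on $\Omega=(0,1)$ gives $\|G\|_{L^\infty}\leq \|\partial_x G\|_{L^2}$, while $\partial_x G=\rho\dot u$ yields $\|\partial_x G\|_{L^2}\leq \sqrt{\rho^*}\|\sqrt\rho\dot u\|_{L^2}$. Combined with the representation $\partial_x u = (G+p(\rho))/\mu(\rho)$ and the density bound of Lemma \ref{lemma2}, this produces
\begin{equation*}
\|\partial_x u(t,\cdot)\|_{L^\infty}\leq C(\mu_*^{-1},\rho^*)\bigl(\|\sqrt\rho\dot u(t,\cdot)\|_{L^2}+1\bigr).
\end{equation*}
Estimating $\int |\partial_x u|^3\,dx \leq \|\partial_x u\|_{L^\infty}\|\partial_x u\|_{L^2}^2$ and applying Young's inequality absorbs a small multiple of $\int\rho|\dot u|^2$ into the left-hand side, leaving on the right a quartic expression $C\|\partial_x u\|_{L^2}^4 + C\|\partial_x u\|_{L^2}^2$.

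To close the estimate, set $F(t):=\tfrac12\int\mu(\rho)(\partial_x u)^2\,dx - \int p(\rho)\partial_x u\,dx$. Young's inequality combined with the density bound shows that $F(t)$ is comparable to $\|\partial_x u(t,\cdot)\|_{L^2}^2$ modulo a constant depending only on $\rho^*,\mu_*,\mu^*$. Writing the quartic term as $\|\partial_x u\|_{L^2}^2\cdot\|\partial_x u\|_{L^2}^2\leq C(F+1)\|\partial_x u\|_{L^2}^2$ yields a differential inequality
\begin{equation*}
F'(t) + \tfrac34\int\rho|\dot u|^2\,dx \leq C\bigl(F(t)+1\bigr)\bigl(\|\partial_x u(t,\cdot)\|_{L^2}^2+1\bigr).
\end{equation*}
Since $\int_0^T\|\partial_x u\|_{L^2}^2\,dt$ is already bounded by Lemma \ref{lemma1}, a linear Gronwall inequality controls $\sup_{[0,T]}F(t)$, hence $\sup_{[0,T]}\|\partial_x u(t,\cdot)\|_{L^2}^2$. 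Integrating the differential inequality in time delivers the $L^2_tL^2_x$ control of $\sqrt\rho\dot u$, whence the bound on $\partial_x G=\rho\dot u$ in $L^2_tL^2_x$ and, via the $L^\infty$ estimate displayed above, the bound on $\partial_x u$ in $L^2_tL^\infty_x$.
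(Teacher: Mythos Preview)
Your proposal is correct and follows essentially the same route as the paper: test $\rho\dot u=\partial_x G$ against $\dot u$ to obtain the key energy identity, bound $\|\partial_x u\|_{L^\infty}$ by $\|\sqrt\rho\,\dot u\|_{L^2}+C$ via $\partial_x G=\rho\dot u$ together with $G|_{\partial\Omega}=0$, and close by Gronwall using the basic energy estimate $\int_0^T\|\partial_x u\|_{L^2}^2\,dt\leq C$. The only cosmetic difference is that the paper adds a multiple of $\|p(\rho)\|_{L^2}^2$ to the Lyapunov functional (with its own evolution estimate) to keep it nonnegative, whereas you absorb $\int p\,\partial_x u$ into a constant shift of $F$ using the uniform bound on $p(\rho)$; since $\rho$ is bounded, the two closures are equivalent.
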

{\it Proof.}   It is clear that with the aid of \eqref{equ-a},    equation  \eqref{equ-b} can be rewritten  as
\begin{align}
\rho\dot{u}+\pl_x p(\rho)=\pl_x(\mu(\rho)\pl_x u).\label{lem310-1}
\end{align}
By taking $L^2$-inner product of \eqref{lem310-1} with $\dot{u},$ we see that
\begin{multline*}
 \frac{1}{2}\frac{d}{dt} \|\sqrt{\mu(\rho)}\pl_x u\|^2_{L^2}+\|\sqrt{\rho}\dot u\|^2_{L^2}-\int_I p\pl_x \dot{u} dx\\= \frac{1}{2}\int_\Omega \pl_t (\mu(\rho)) (\pl_x u)^2\,dx+\int_\Omega \pl_x(\mu(\rho)\pl_x u) (u \pl_x u) \,dx.
\end{multline*}
Notice that
\begin{align}
\pl_t (\mu(\rho)) +u\pl_x( \mu(\rho))+\mu'(\rho) \rho \pl_x u=0\label{murho-1}
\end{align}
and
\begin{align*}
\int_\Omega \pl_x(\mu(\rho)\pl_x u) (u \pl_x u) \,dx=\frac{1}{2}\int_\Omega \pl_x(\mu(\rho)) ( \pl_x u)^2 u \,dx-\frac{1}{2}\int_\Omega  \mu(\rho)(\pl_x u)^3 \,dx,
\end{align*}
we  find from above  that
\begin{multline}\label{lem311-2}
 \frac{1}{2}\frac{d}{dt} \|\sqrt{\mu(\rho)}\pl_x u\|^2_{L^2}+\|\sqrt{\rho}\dot u\|^2_{L^2}-\int_\Omega p\pl_x \dot{u} dx\\= -\frac{1}{2}\int_\Omega  \mu'(\rho)\rho  \pl_x u \,dx  -\frac{1}{2}\int_\Omega  \mu(\rho)(\pl_x u)^3 \,dx.
\end{multline}
Using  that
$$\int_\Omega p\pl_t \pl_x u dx=\frac{d}{dt}\int_\Omega p \pl_x u dx -\int_\Omega \pl_t p \pl_x u dx$$
and \eqref{equ-a},    we   rewrite \eqref{lem311-2} as
 \begin{multline}\label{lem311-3}
 \frac{1}{2}\frac{d}{dt} \|\sqrt{\mu(\rho)}\pl_x u\|^2_{L^2}+\|\sqrt{\rho}\dot u\|^2_{L^2}-\frac{d}{dt}\int_\Omega p \pl_x u dx\\
  =-\frac{1}{2}\int_\Omega \left( \mu'(\rho)\rho  \pl_x u  + \mu(\rho)(\pl_x u)^3 \right) \,dx+\int_\Omega p'(\rho) \rho(\pl_x u)^2 \, dx.
\end{multline}
Then,   with the aid of $p, \mu \in C^1(\mathbb{R}_+)$ and the boundedness of $\rho$ in \eqref{upperbdd},  we get from \eqref{lem311-2} that
\begin{align}
 \frac{1}{2}\frac{d}{dt} (\|\sqrt{\mu(\rho)}\pl_x u\|^2_{L^2}-2\int_\Omega p \pl_x u dx)+\|\sqrt{\rho}\dot u\|^2_{L^2}\leq&  C(\|\rho\|_{L^\infty})    (\|\pl_x u\|_{L^\infty}+1)\|\pl_x u\|^2_{L^2}.\label{lem311-5}
\end{align}

 At this stage,    it is necessary to estimate  pressure.   Thanks to \eqref{equ-a} and a basic energy estimate of $p(\rho),$  one  gets
\begin{align}
 \frac{d}{dt}\|p(\rho)\|_{L^2}^2 &\leq \|\pl_x u\|_{L^2}\|p^2(\rho)\|_{L^2} +\|p'(\rho)\rho p(\rho)\|_{L^2}\|\pl_x u\|_{L^2}\notag\\
&\leq C(\|\rho\|_{L^\infty}) \|\pl_x u\|_{L^2},\label{estp}
\end{align}
where   the boundedness of $\rho$ is used again.

Now we combine \eqref{lem311-5} and \eqref{estp} to  write
\begin{multline*}
 \frac{d}{dt}\lt(\|\sqrt{\mu(\rho)}\pl_x u\|^2_{L^2}+8{\mu_*}^{-1}\|p(\rho)\|^2_{L^2}-2 \int_\Omega  p \pl_x u dx\rt)+\|\sqrt{\rho} \dot u\|^2_{L^2}
 \\
  \leq C(\|\rho\|_{L^\infty}, \mu_*^{-1}) \lt((\|\pl_x u\|_{L^\infty}+1)\|\pl_x u\|^2_{L^2}+\|\pl_x u\|_{L^2}\rt)
\end{multline*}
Due to the fact that
\begin{align*}
&\|\sqrt{\mu(\rho)}\pl_x u\|^2_{L^2}+8\mu_*^{-1}\|p(\rho)\|^2_{L^2}-2 \int_\Omega p \pl_x u dx\\
&\geq \mu_*\|\pl_x u\|^2_{L^2}+8\mu_*^{-1}\|p(\rho)\|^2_{L^2} -\frac{1}{2}(4\mu_*^{-1} \|p\|^2_{L^2}+  \mu_*\|\pl_x u\|^2_{L^2})\geq \frac{1}{2}(\mu_*\|\pl_x u\|^2_{L^2}+\mu_*^{-1}\|p(\rho)\|_{L^2}^2),
 \end{align*}
it yields that
\begin{align}
&\mu_*\|\pl_x u\|^2_{L^2} +\int_0^t \|(\sqrt{\rho} \dot u)(s, x)\|^2_{L^2}ds\notag\\
&\leq \mu^*\|\pl_x u_0\|_{L^2}^2+     C(\|\rho\|_{L^\infty}, \mu_*^{-1})    \int_0^t  \lt((\|\pl_x u\|_{L^\infty}+1)\|\pl_x u\|^2_{L^2}+\|\pl_x u\|_{L^2}\rt)(s, x) ds.\label{Lem2.3000}
\end{align}

 Now,   we read \eqref{lem310-1} as
\begin{align*}
\pl_x(\mu(\rho)\pl_x u-p(\rho))=\rho \dot{u}
\end{align*}
and write that
\begin{align*}
\|\pl_x (\mu(\rho)\pl_x u- p)\|_{L^1} =\|\rho\dot{u}\|_{L^1}\leq&  \| {\rho} \|^{\frac{1}{2}}_{L^1}\|{\sqrt{\rho} \dot u}\|_{L^2}\\
\leq& \|{\sqrt{\rho} \dot u}\|_{L^2}
\end{align*}
This together with the fact that  $\dot W^{1, 1}(\Omega)\hookrightarrow L^\infty(\Omega)$ is valid for $G$,  the boundedness of $\rho$ and condition \eqref{mucon}  yield that
\begin{align}
\mu_*\| \pl_x u\|_{L^\infty}
\leq \|\mu(\rho) \pl_x u-p(\rho)\|_{L^\infty}  + \|p(\rho)\|_{L^\infty}\leq  \|\sqrt{\rho}\dot u\|_{L^2}+   C(\| \rho\|_{L^\infty}).\label{ellest}
\end{align}

Finally,  in virtue of \eqref{Lem2.3000},  \eqref{ellest} and \eqref{lem310} and  \eqref{upperbdd},  we conclude that
\begin{multline*}
  \sup_{[0, T]}\|\pl_x u(t, \cdot)\|^2_{L^2}+\int_0^T\|\pl_x u(t, \cdot)\|^2_{L^\infty}\,dt +\int_0^T \|(\sqrt{\rho} \dot u)(t, \cdot)\|^2_{L^2}ds +\int_0^T \|\partial_x G(t, \cdot)\|_{L^2}^2\,dx\\
 \leq C(\|\pl_x u_0\|_{L^2}^2,    E_0,  \rho^*,  \mu_*^{-1}, \mu^*, T).
\end{multline*}
This completes the proof of Lemma \ref{lemma3}.    \hfill $\Box$

\subsection{Time-weighted energy estimates}   It is clear that Lemma \ref{lemma3} requires a control of derivative of initial velocity,   and hence does not apply to merely finite-energy initial data.    In what follows,  we show that   it is possible to capture higher-order regularity estimates that  similar to  Lemma \ref{lemma3} by attaching additional time weights. This is due to the smoothing effect of  the momentum equation \eqref{equ-b} and is a key to the proof of  Theorem \ref{thm1}.

\begin{Lemma} \label{lemma4}
Consider  a smooth solution $(\rho, u)$ with $\rho>0$ defined on $[0, T]\times \Omega$   of  problem \eqref{equ}.
Then we have
\begin{multline}\label{lem411}
 \sup_{[0, T]} \| \sqrt{t}\pl_x u(t, \cdot)\|_{L^2}^2  +\int_0^T\lt( \|\sqrt{t}\pl_x u(t, \cdot )\|^2_{L^\infty} +\|(\sqrt{\rho\, t} \dot{u})(t, \cdot)\|^2_{L^2}+\|\sqrt{t}\partial_x G(t, \cdot)\|_{L^2}^2\rt)\, dt\\
 \leq  C( E_0,  \rho^*,  \mu_*, \mu^*, T).
\end{multline}
\end{Lemma}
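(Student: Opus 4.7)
\medskip

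\noindent\textbf{Proof plan for Lemma \ref{lemma4}.}

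My strategy is to revisit the computation in the proof of Lemma \ref{lemma3}, but to attach the weight $t$ before invoking Gr\"onwall. The key point is that the time weight kills the contribution of $\|\partial_x u_0\|_{L^2}^2$, so we can close the estimate using only the basic energy bound \eqref{lem310} and the $L^\infty$ bound \eqref{upperbdd} on the density. I would not re-derive the differential identity from scratch; I would simply start from the inequality
\begin{equation*}
\frac{d}{dt}\Phi(t)+\|\sqrt{\rho}\dot u\|_{L^2}^2\leq C(\rho^*,\mu_*^{-1})\bigl((\|\partial_x u\|_{L^\infty}+1)\|\partial_x u\|_{L^2}^2+\|\partial_x u\|_{L^2}\bigr),
\end{equation*}
where $\Phi(t):=\|\sqrt{\mu(\rho)}\partial_x u\|_{L^2}^2+8\mu_*^{-1}\|p(\rho)\|_{L^2}^2-2\int_\Omega p\,\partial_x u\,dx$ is (up to a harmless constant) comparable to $\mu_*\|\partial_x u\|_{L^2}^2+\mu_*^{-1}\|p(\rho)\|_{L^2}^2$, exactly as established inside the proof of Lemma \ref{lemma3}.

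The first key step is to multiply this differential inequality by $t$ and use $\frac{d}{dt}(t\Phi)=t\Phi'+\Phi$ to obtain
\begin{equation*}
\frac{d}{dt}(t\Phi)+t\|\sqrt{\rho}\dot u\|_{L^2}^2\leq \Phi(t)+Ct\bigl((\|\partial_x u\|_{L^\infty}+1)\|\partial_x u\|_{L^2}^2+\|\partial_x u\|_{L^2}\bigr).
\end{equation*}
Integrating from $0$ to $T$ and using the fact that $t\Phi(t)|_{t=0}=0$ removes the dependence on $\|\partial_x u_0\|_{L^2}$. The term $\int_0^T\Phi(t)\,dt$ is handled directly by Lemma \ref{lemma1}, since $\Phi$ is controlled by $\|\partial_x u\|_{L^2}^2+\|p(\rho)\|_{L^2}^2$ and the first is integrable in time by the energy balance while the second is bounded by $\|\rho\|_{L^\infty}$.

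The second key step is to bound $\|\partial_x u\|_{L^\infty}$ in terms of $\|\sqrt{\rho}\dot u\|_{L^2}$ so that the dangerous cubic-type term on the right-hand side can be absorbed. The elliptic estimate \eqref{ellest} already established in the proof of Lemma \ref{lemma3} gives
\begin{equation*}
\mu_*\|\partial_x u\|_{L^\infty}\leq \|\sqrt{\rho}\dot u\|_{L^2}+C(\rho^*).
\end{equation*}
Plugging this into the right-hand side and using Young's inequality:
\begin{equation*}
t\|\partial_x u\|_{L^\infty}\|\partial_x u\|_{L^2}^2\leq \varepsilon\,t\|\sqrt{\rho}\dot u\|_{L^2}^2+C_\varepsilon t\|\partial_x u\|_{L^2}^4+Ct\|\partial_x u\|_{L^2}^2,
\end{equation*}
with $\varepsilon$ small enough that the first term is absorbed into the left-hand side.

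The final step is a Gr\"onwall argument on $F(t):=t\Phi(t)$. The quartic term on the right becomes $C_\varepsilon\|\partial_x u\|_{L^2}^2\cdot F(s)$, where the coefficient $\|\partial_x u\|_{L^2}^2$ is integrable in time uniformly in $T$ by Lemma \ref{lemma1}. All remaining terms produce an additive constant depending only on $E_0,\rho^*,\mu_*,\mu^*,T$. Gr\"onwall then yields the bound on $\sup_{[0,T]}\|\sqrt{t}\,\partial_x u(t)\|_{L^2}^2$ and simultaneously on $\int_0^T t\|\sqrt{\rho}\dot u\|_{L^2}^2\,dt$. The remaining two quantities in \eqref{lem411} follow at no extra cost: $\int_0^T t\|\partial_x u\|_{L^\infty}^2\,dt$ comes from \eqref{ellest}, and $\int_0^T t\|\partial_x G\|_{L^2}^2\,dt$ follows from $\partial_x G=\rho\dot u$.

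\medskip

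\noindent The main obstacle I anticipate is the cubic term $\int \mu(\rho)(\partial_x u)^3\,dx$ inherited from the proof of Lemma \ref{lemma3}; without the time weight it forces control of $\|\partial_x u_0\|_{L^2}$, and even with the weight it would be a genuine obstruction were it not for the elliptic gain \eqref{ellest}, which converts one factor of $\|\partial_x u\|_{L^\infty}$ into $\|\sqrt{\rho}\dot u\|_{L^2}$ that the left-hand side can absorb. Once this absorption is done cleanly, the Gr\"onwall closure is routine because the energy estimate supplies the $L^1_t$ integrability of $\|\partial_x u\|_{L^2}^2$ that the argument needs.
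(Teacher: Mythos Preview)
Your proposal is correct and follows essentially the same approach as the paper. The only cosmetic difference is that the paper weights the momentum equation by $\sqrt{t}$ \emph{before} deriving the energy identity (working with $\partial_x(\sqrt{t}\,u)$ throughout), whereas you derive the unweighted differential inequality first and then multiply by $t$; both routes produce the same extra term controlled by $\int_0^T\|\partial_x u\|_{L^2}^2\,dt$ via Lemma~\ref{lemma1}, and both close by the elliptic estimate \eqref{ellest} and Gr\"onwall exactly as you describe.
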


{\it Proof.}   The proof of Lemma \ref{lemma4} is basically the same as for the Lemma \ref{lemma3},  just in the current case one needs to start estimation from the following time-weighted momentum equation:
\begin{align}
\sqrt{t}\rho\dot{u}+\pl_x (\sqrt{t}p(\rho))=\pl_x(\mu(\rho)\pl_x (\sqrt{t}u)).\label{lem410-1}
\end{align}
For reader's convenience,   we provide full details at here.    Precisely,   take $L^2$-inner product of \eqref{lem410-1} with $\sqrt{t}\dot{u},$ we obtain  that
\begin{align}
&\frac{1}{2}\frac{d}{dt} \int_{\Omega} \mu(\rho)|\pl_x(\sqrt{t} u)|^2 dx+\int_{\Omega} (\sqrt{t \rho} \dot u)^2 dx-\int_{\Omega} t p\pl_x \dot{u} dx\notag\\
=&\frac{1}{2}\int_\Omega \pl_t \mu(\rho)|\pl_x(\sqrt{t} u)|^2dx +\frac{1}{2}\int_\Omega  \mu(\rho)|\pl_xu|^2dx
+\int_\Omega \pl_x \mu(\rho)|\pl_x(\sqrt{t} u)|^2dx\notag\\
&-\frac{1}{2}\int_\Omega \mu(\rho) |\pl_x(\sqrt{t} u)|^2 \pl_x u dx.\notag
\end{align}
Using the fact \eqref{murho-1}, we rewrite the above  equality as follows
\begin{align}
&\frac{1}{2}\frac{d}{dt} \int_{\Omega} \mu(\rho)|\pl_x(\sqrt{t} u)|^2 dx+\int_{\Omega} (\sqrt{t \rho} \dot u)^2 dx-\int_{\Omega} t p\pl_x \dot{u} dx\notag\\
=& \frac{1}{2}\int_\Omega  \mu(\rho)|\pl_xu|^2dx
-\frac{1}{2}\int_\Omega (\mu(\rho)+\mu'(\rho)\rho) |\pl_x(\sqrt{t} u)|^2 \pl_x u dx.\label{lem0805}
\end{align}
Together with the fact  that
\begin{align*}
-\int_\Omega t p \pl_x \dot u dx=-\frac{d}{dt}\int_\Omega t p \pl_x u dx +\int_\Omega t \pl_tp \pl_x u dx +\int_\Omega p \pl_x udx
+\int_\Omega t \pl_x pu \pl_x udx
\end{align*}
and
\begin{align}
\pl_t p(\rho)+\pl_x( p(\rho)) u+p'(\rho) \rho \pl_x u=0,\label{preequ}
\end{align}
we deduce from \eqref{lem0805} that
\begin{align}
&\frac{1}{2}\frac{d}{dt} \int_{\Omega}\lt( \mu(\rho)|\pl_x(\sqrt{t} u)|^2  -2 t p \pl_x u\rt)dx+\int_{\Omega} (\sqrt{t \rho} \dot u)^2dx\notag\\
=& \frac{1}{2}\int_\Omega  \mu(\rho)|\pl_xu|^2dx
-\frac{1}{2}\int_\Omega (\mu(\rho)+\mu'(\rho)\rho) |\pl_x(\sqrt{t} u)|^2 \pl_x u dx\notag\\
&-\int_\Omega p \pl_x u dx +\int_\Omega p'(\rho)\rho |\pl_x(\sqrt{t} u)|^2 dx.\label{lem0805-1}
\end{align}
The basic energy estimates for the pressure equation $\sqrt{t}\times\eqref{preequ}$ gives
\begin{align}
\frac{d}{dt}\|\sqrt{t} p(\rho)\|_{L^2}^2\leq &\|p(\rho)\|_{L^2}^2+\|p^2(\rho)\|_{L^2}\|\sqrt{t}\pl_x(\sqrt{t} u)\|_{L^2}\notag\\
&+\|p(\rho)p'(\rho)\rho\|_{L^2}\|\sqrt{t}\pl_x(\sqrt{t}u)\|_{L^2}. \label{tpenergy}
\end{align}
With the aid of \eqref{lem310} and \eqref{upperbdd}, we combine \eqref{lem0805-1} and \eqref{tpenergy} to get
\begin{align}
&\|\pl_x(\sqrt{t}u)\|^2_{L^2} +\int_0^t \|(\sqrt{s \rho} \dot u)(s,x)\|^2_{L^2}ds \notag\\
\leq & C(\mu_*^{-1})E(0)+ C(\mu_*^{-1}, \mu^*) \int_0^t \lt( \|p(\rho)\|_{L^2}^2 +|\pl_x(\sqrt{s}u )|_{L^\infty}s^{-1/2}\|\pl_x(\sqrt{s}u)\|_{L^2}^2 \rt) ds \notag\\
 &+C(\mu_*^{-1}, \mu^*) \int_0^t\lt((1+s) \|\pl_x(\sqrt{s}u)\|_{L^2}^2+ \|\pl_x(\sqrt{s}u)\|_{L^2}\rt) ds.\label{plxtul2}
\end{align}
Moreover, similar to derive \eqref{ellest}, we have
\begin{align}
\mu_*\|\pl_x(\sqrt{t}u)\|_{L^\infty}&\leq \|\mu(\rho)\pl_x(\sqrt{t} u)-\sqrt{t}p(\rho)\|_{L^\infty}+\|\sqrt{t}p(\rho)\|_{L^\infty}\notag\\
&\leq \|\sqrt{t \rho} \dot u\|_{L^2}+C(\mu^*)\sqrt{t}.\label{plxtulinfty}
\end{align}

Now, with the help of \eqref{plxtul2}, \eqref{plxtulinfty} and Gronwall's inequality, we prove the inequality \eqref{lem411}.
The proof of Lemma \ref{lemma4} is completed.   \hfill $\Box$

 Roughly speaking,   Lemma \ref{lemma4} and Lemma \ref{lemma3}   indicate that weak solutions will become regular instanously.   As such,   one can try to  find higher-order estimates  under a better control.    The following lemma plays an important role on the compactness result of   density.

\begin{Lemma}\label{wlem}
Consider  a smooth solution $(\rho, u)$  with $\rho>0$ defined on $[0, T]\times \Omega$   of  problem \eqref{equ}. Then,  for all $t_0\in[0,  T]$ we have
\begin{align}
\sup_{[t_0, T]} \|\sqrt{ t\rho}\dot u(t, \cdot)\|_{L^2}^2  + \int_{t_0}^T  \|\sqrt{t}\partial_x \dot{u}(t, \cdot)\|_{L^2}^2\,dt \leq   C(\|\partial_x u(t_0)\|_, E_0,  \rho^*,  \mu_*, \mu^*, T).\label{lem-w}
\end{align}
\end{Lemma}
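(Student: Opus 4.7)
The estimate is the standard higher-order bound on the material derivative $\dot u$ in the style of Hoff. My plan is to apply $\frac{D}{Dt}=\partial_t+u\partial_x$ to the momentum equation rewritten as $\rho\dot u=\partial_x G$, with $G=\mu(\rho)\partial_x u-p(\rho)$, and then test against $t\dot u$. Using the continuity equation $\dot\rho=-\rho\partial_x u$ one obtains
\[
 \rho\ddot u \;=\; \rho(\partial_x u)\dot u \;+\; \partial_x\dot G \;-\; (\partial_x u)(\partial_x G),
\]
and a direct chain-rule computation gives
\[
 \dot G \;=\; \mu(\rho)\partial_x\dot u \;-\; \bigl(\mu(\rho)+\mu'(\rho)\rho\bigr)(\partial_x u)^2 \;+\; p'(\rho)\rho\,\partial_x u.
\]
Because $u\equiv 0$ on $\partial\Omega$ one has $\dot u=\partial_t u+u\partial_x u=0$ on $\partial\Omega$, so all boundary contributions from integration by parts vanish.

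The transport identity $\partial_t(\rho\dot u^2)+\partial_x(\rho u\dot u^2)=2\rho\dot u\ddot u$ yields
\[
 2t\int_\Omega\rho\dot u\,\ddot u\,dx \;=\; \frac{d}{dt}\!\int_\Omega t\rho\,\dot u^2\,dx \;-\; \int_\Omega\rho\,\dot u^2\,dx.
\]
Combining this with the identity above and the integration by parts $\int_\Omega t\dot u\,\partial_x\dot G\,dx=-\int_\Omega t\,(\partial_x\dot u)\dot G\,dx$, the principal term $-\int t\mu(\rho)(\partial_x\dot u)^2\,dx$ emerges and I obtain the energy identity
\[
 \frac12\frac{d}{dt}\!\int_\Omega t\rho\dot u^2\,dx \;+\; \int_\Omega t\mu(\rho)(\partial_x\dot u)^2\,dx \;=\; \frac12\int_\Omega\rho\dot u^2\,dx \;+\; \mathcal{R}(t),
\]
where $\mathcal{R}(t)$ collects the cubic remainder terms of the form $t\int\rho\,\partial_x u\,\dot u^2\,dx$ (appearing twice after using $\partial_x G=\rho\dot u$) and $t\int\partial_x\dot u\cdot\{(\mu+\mu'\rho)(\partial_x u)^2-p'\rho\,\partial_x u\}\,dx$.

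The remainder is controlled by Cauchy--Schwarz and Young's inequality: the second bracket of $\mathcal{R}(t)$ absorbs half of the dissipation $\|\sqrt{t\mu}\partial_x\dot u\|_{L^2}^2$ at the cost of $\|\partial_x u\|_{L^\infty}^2\cdot t\|\partial_x u\|_{L^2}^2$, while the cubic term satisfies
\[
 t\!\int_\Omega\rho\,\partial_x u\,\dot u^2\,dx \;\leq\; \|\partial_x u\|_{L^\infty}\,\|\sqrt{t\rho}\,\dot u\|_{L^2}^2.
\]
The first term on the right of the identity is absorbed via $\tfrac12\int_{t_0}^T\|\sqrt\rho\dot u\|_{L^2}^2\,dt\leq \tfrac{1}{2t_0}\int_0^T\|\sqrt{t\rho}\dot u\|_{L^2}^2\,dt$ using Lemma \ref{lemma4} (or, if $t_0=0$ and $\partial_x u_0\in L^2$, Lemma \ref{lemma3} directly). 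Since Lemmas \ref{lemma3}--\ref{lemma4} already bound $\int_0^T\|\partial_x u\|_{L^\infty}^2\,dt$ (provided one has $\partial_x u(t_0)\in L^2$, which is assumed in the hypothesis), Gronwall's lemma applied to $y(t):=\|\sqrt{t\rho}\dot u\|_{L^2}^2$ closes the estimate and delivers \eqref{lem-w}.

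\textbf{Main obstacle.} The delicate point is handling the cubic interaction $t\int\rho\,\partial_x u\,\dot u^2\,dx$: a direct $L^\infty$ bound on $\partial_x u$ is not available purely in terms of the energy norm, and this is precisely where the effective viscous flux representation $\mu_*\|\partial_x u\|_{L^\infty}\lesssim\|\sqrt\rho\dot u\|_{L^2}+C(\rho^*)$ from \eqref{ellest} becomes indispensable, turning the Gronwall coefficient into an $L^2_t$ (not merely $L^\infty_t$) quantity, which is the only way to close the argument since no uniform-in-$t$ bound on $\|\partial_x u\|_{L^\infty}$ is at our disposal.
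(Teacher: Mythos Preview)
Your proposal is correct and follows essentially the same approach as the paper: apply the material derivative to the momentum equation, test against the time-weighted $\dot u$, extract the dissipation $\int t\mu(\rho)|\partial_x\dot u|^2$, estimate the cubic remainders via Young's inequality and the bounds on $\|\partial_x u\|_{L^\infty}$ from Lemmas~\ref{lemma3}--\ref{lemma4}, and close by Gr\"onwall. The paper organizes the computation by first multiplying by $\sqrt t$, then applying $D/Dt$, then testing with $\sqrt t\,\dot u$, which produces six explicit remainder terms $R_1,\dots,R_6$; your route through $G$ and the formula $\dot G=\mu\partial_x\dot u-(\mu+\mu'\rho)(\partial_x u)^2+p'\rho\,\partial_x u$ is notationally tidier but equivalent.

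One small computational remark: the two cubic terms you record in $\mathcal R(t)$ actually \emph{cancel} rather than add. Indeed, the term $\int t\rho(\partial_x u)\dot u^2\,dx$ coming from $\rho(\partial_x u)\dot u$ in your identity for $\rho\ddot u$ has the opposite sign to $-\int t(\partial_x u)(\partial_x G)\dot u\,dx=-\int t\rho(\partial_x u)\dot u^2\,dx$ once you substitute $\partial_x G=\rho\dot u$. (The paper's $R_3$ and $R_4$ exhibit the same cancellation, though the paper estimates them separately without exploiting it.) This only makes your remainder smaller, so the argument goes through unchanged.
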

{\it Proof.}  At first,  multiplying  the momentum equation \eqref{equ-b} by $\sqrt{t}$ and taking material derivative on the resulting equation yield that
\begin{multline}\label{mdu}
\rho\frac{D}{Dt}\lt( \sqrt{t}\dot{u}\rt)   +\pl_x\lt(\sqrt{t}\dot{p}\rt)-\pl_x \lt( \mu(\rho)\sqrt{t}\pl_x\dot{u}\rt)-\frac{1}{2\sqrt{t}}\rho\dot{u}+\lt(\sqrt{t}\pl_x u\rt)\rho\dot{u}\\
=-\dot{\rho} \lt( \sqrt{t}\dot{u}\rt)+\pl_x \lt(\lt(\sqrt{t}\pl_x u\rt)  \frac{D}{Dt}\mu(\rho)  -\mu(\rho)\sqrt{t}(\pl_x u)^2 \rt).
\end{multline}
We integrate the product of \eqref{mdu} and $\sqrt{t} \dot u$ over $\Omega$ and use  boundary condition \eqref{equ-d}      to  write
\begin{align}
 \frac{1}{2}\frac{d}{dt}\|\sqrt{\rho\,t}\dot{u}\|_{L^2}^2+\|\sqrt{t\,\mu(\rho)}\pl_x \dot{u}\|_{L^2}^2=R_1+\cdots R_5,\label{mdu1}
\end{align}
where
\begin{align*}
&R_1=  \int_\Omega      \pl_x\lt(  \sqrt{t}\dot{u}\rt)\,\lt(\sqrt{t}\dot{p} \rt)\,dx,\\
&R_2= \frac{1}{2 }\int_\Omega  \rho(\dot{u} )^2   \,dx,\\
&R_3=-\int_\Omega   \lt(t\pl_x u\rt)\rho(\dot{u} )^2  \,dx,\\
&R_4=-\int_\Omega  \dot{\rho} \lt( \sqrt{t}\dot{u}\rt)^2    \,dx,\\
&R_5=-\int_\Omega  \pl_x\lt(     \sqrt{t}\dot{u}   \rt) \lt(\sqrt{t}\pl_x u\rt)  \frac{D}{Dt}\mu(\rho)   \,dx,\\
&R_6=\int_\Omega  \pl_x\lt(     \sqrt{t}\dot{u}   \rt)    \mu(\rho)\sqrt{t}(\pl_x u)^2    \,dx.
\end{align*}
Next, we will deal with the remainder  terms.    Firstly,    notice that
\begin{align*}
R_2=\frac{1}{2}\|\sqrt{\rho} \dot{u}\|_{L^2}^2
\end{align*}
and
\begin{align*}
|R_3|\leq      \|\partial_x u\|_{L^\infty}\,\|\sqrt{t \rho} \dot{u}\|_{L^2}^2
\end{align*}
and
\begin{align*}
|R_6| \leq  \|\sqrt{\mu(\rho)}\|_{L^\infty}  \, \|\partial_x u\|_{L^\infty}\,         \|\sqrt{t}\partial_x u\|_{L^2}          \|\sqrt{t \,\mu(\rho)}\,\pl_x \dot{u}\|_{L^2}.
\end{align*}
To handle $R_1, ~R_4,~R_5$,    we take use of  equation \eqref{equ-a}  frequently and write that
\begin{align*}
|R_1|=&     \int_\Omega    \lt|  \pl_x\lt(  \sqrt{t}\dot{u}\rt)\,\sqrt{t}\lt(   p'(\rho)\,\rho\,\partial_x u       \rt) \rt|\,dx\\
\leq    &   \|p'(\rho)\,\rho\|_{L^\infty}    \, \|\sqrt{t  }\,\pl_x \dot{u}\|_{L^2}\,   \|\sqrt{t}\partial_x u\|_{L^2},
\end{align*}
\begin{align*}
|R_4|=      \int_\Omega    \lt|  \rho\,\pl_x u \rt| \lt( \sqrt{t}\dot{u}\rt)^2   \,dx
\leq      \|\partial_x u\|_{L^\infty}\,    \|\sqrt{t\,\rho} \dot{ u}\|_{L^2}^2
\end{align*}
and
\begin{align*}
|R_5|=&      \int_\Omega \lt|   \pl_x\lt(     \sqrt{t}\dot{u}   \rt)  \lt(\sqrt{t}\pl_x u\rt)  (\mu'(\rho)\,\rho\,\pl_x u)\rt|    \,dx \\
\leq&     \|\mu'(\rho)\,\rho\|_{L^\infty}\,  \|\partial_x u\|_{L^\infty}\,        \|\sqrt{t }\pl_x \dot{ u}\|_{L^2} \, \|\sqrt{t}\partial_x u\|_{L^2}.
\end{align*}

Taking into account of \eqref{mdu1} and  above estimates of $R_1, \cdots, R_6,$ it follows that
\begin{multline}\label{es-lem5}
 \frac{d}{dt}\|\sqrt{\rho\,t}\dot{u}\|_{L^2}^2+ \|\sqrt{t\,\mu(\rho)}\pl_x \dot{u}\|_{L^2}^2\\
 \leq   \|\sqrt{\rho} \dot{u}\|_{L^2}^2+   \|\partial_x u\|_{L^\infty}\,\|\sqrt{t \rho} \dot{u}\|_{L^2}^2  + C(\|\rho\|_{L^\infty}, \mu_*) \lt(   \|\partial_x u\|_{L^\infty}^2+1 \rt) \|\sqrt{t}\partial_x u\|_{L^2}^2,
\end{multline}
where we used frequently the Young inequality and conditions \eqref{pcon}, \eqref{mucon}.

Finally,   we apply Lemma \ref{lemma2}-\ref{lemma4} to \eqref{es-lem5} and use  Gr\"onwall's  inequality to conclude  \eqref{lem-w}.    The proof of this lemma is completed. \hfill $\Box$

\section{Proof of existence part}
This section is devoted to the proof of existence parts in Theorem \ref{thm1}.    
In a first step,    we need to construct a sequence of approximate solutions by using some ideas developed by Jiang-Xin-Zhang in \cite{MR2254008}
for one-dimensional compressible Navier-Stokes equations with free boundary.   Note that the boundary condition in \cite{MR2254008} is of the same type as \eqref{equ-d}.   Then,   we will focus on establishing the compactness property for   the  global approximate solutions,   in particular the one for the density that stated in Proposition  \ref{Lem-compa1}.
To begin with,     we have to modify the initial data to fit  the requirement on the regularity and make the density strictly positive.
\subsection*{ Step 1.   Modified initial data}
 We denote by $j_\ep(x)$ the Friedrichs mollifier.  Let $\psi(x)\in\mathcal{C}^\infty_0(\R)$ satisfy $\psi(x)\equiv 1$ when $|x|\leq \frac{1}{2}$ and $\psi(x)\equiv 0$ when $|x|\geq 1,$ and define $\psi_\ep(x):=\psi(\frac{x}{\ep}).$ For simplicity we still denote by $(\rho_0, u_0)$ the extension of  $(\rho_0, u_0)$ in $\R$,  i.e.
\begin{equation}
\rho_0(x)=\left\{\begin{aligned}
&\rho_0(1),\quad &&\ \  x\in(1, \infty),\\
&\rho_0(x), \quad &&\ \  x\in[0, 1],\\
&\rho_0(0),\quad &&\ \ x\in(-\infty, 0)
\end{aligned}\right. \andf
u_0(x)=\left\{\begin{aligned}
& u_0(x), \quad &&\ \ x\in[0, 1],\\
& 0,\quad &&\ \ {\rm otherwise}.
\end{aligned}\right.
\end{equation}
   We define the approximate initial data to $(\rho_0,  u_0):$
   \begin{align*}
   \rho_0^\ep(x):=& (\rho_0\star j_\ep)(x)+\ep,\\
   u_0^\ep(x):=&   (u_0\star j_\ep)(x)[1-\psi_{\ep}(x)-\psi_{\ep}(1-x)] +(u_0\star j_\ep)(0)\,\psi_{\ep}(x)+  (u_0\star j_\ep)(1)\,\psi_{\ep}(1-x)\\
   &\quad +\frac{p(\rho_0^\ep(x))}{\mu(\rho_0^\ep(x))}\int_0^x \psi_{\ep}(y)\,dy+  \frac{p(\rho_0^\ep(x))}{\mu(\rho_0^\ep(x))}\int_{x}^1 \psi_{\ep}(1-y)\,dy.
   \end{align*}
  Then $\rho_0^\ep\in\mathcal{C}^{1+s}([0, 1]),$  $u_0^\ep\in\mathcal{C}^{2+s}([0, 1])$   for any $s\in(0, 1),$  and $ \rho^\ep_0$ and $u_0^\ep$ are compatible with the boundary conditions \eqref{equ-d},  such that as $\ep\to 0$ we have
  \begin{equation}
  \rho_0^\ep\to \rho_0 \quad  {\rm in}~L^2(\Omega) \andf u^\ep_0\to u_0 \quad  {\rm in}~L^2(\Omega).
\end{equation}

\subsection*{Step 2.  Global existence of approximate   smooth solutions}
We are going to solve \eqref{equ}  with data $(\rho_0^\epsilon, u_0^\epsilon).$  Since  the viscosity $\mu(s)\geq \mu_*>0$ for every $s\geq 0,$  the following global in time existence result is in the same way as that of \cite[Page 246]{MR2254008}.

\begin{Proposition}\label{prolocal}
Consider the {\rm IBVP} \eqref{equ} with initial data $(\rho_0, u_0)$ replaced by $(\rho_0^\epsilon, u_0^\epsilon).$
There   exists a unique global in time  regular solution $(\rho^\epsilon, u^\epsilon)$  on  $[0, T]$ for any $T>0,$ such that $\rho^\epsilon(t, x)>0$ and
\begin{align}
u^\epsilon\in L^\infty(0, T; H^1(\Omega))\cap  L^2(0, T; H^2(\Omega)) ; \ \ \rho^\epsilon \in L^\infty(0, T; H^1(\Omega)).\label{local}
\end{align}
\end{Proposition}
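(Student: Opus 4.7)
The plan is to combine local well-posedness for strictly positive, smooth initial data with the a priori estimates of Section \ref{S2} to extend a local solution to any prescribed finite time $T$. First I would construct a local-in-time classical solution on some short interval $[0, T_*]$ by a standard iteration/contraction scheme: linearize the momentum equation around a known velocity, recover $\rho$ from the continuity equation by the method of characteristics, and use Schauder-type estimates on the uniformly parabolic linearized equation together with a fixed-point argument in H\"older spaces. Since $\rho_0^\epsilon\geq\epsilon>0$ lies in $\mathcal{C}^{1+s}$, $u_0^\epsilon\in\mathcal{C}^{2+s}$ is compatible with the stress-free boundary condition of \eqref{equ-d}, and $\mu(\rho_0^\epsilon)$ is bounded from below, this yields a regular solution with $\rho^\epsilon>0$ on $[0, T_*]$; uniqueness in this class follows from a standard $L^2$ energy estimate on the difference of two solutions.

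To continue this local solution up to $T$, the essential ingredient is a set of a priori bounds on $[0, T_*]$ that do not degenerate as $T_*$ approaches the hypothetical maximal existence time. Lemmas \ref{lemma1}--\ref{lemma4}, which were established precisely for smooth solutions with positive density, provide uniform control of $\|\rho^\epsilon\|_{L^\infty}$, $\|\partial_x u^\epsilon\|_{L^\infty(0,T;L^2)}$, $\|\partial_x u^\epsilon\|_{L^2(0,T;L^\infty)}$ and $\|\sqrt{\rho^\epsilon}\dot u^\epsilon\|_{L^2(0,T;L^2)}$ in terms of $T$, the modified data, and $\mu_*,\mu^*,\rho^*$. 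The remaining $\epsilon$-dependent bound needed is a strictly positive lower bound on the density. Rewriting the continuity equation as $\partial_t \ln\rho^\epsilon+u^\epsilon\partial_x\ln\rho^\epsilon=-\partial_x u^\epsilon$ and integrating along the Lagrangian trajectories $X^\epsilon(t,y)$ yields
\[
\rho^\epsilon(t, X^\epsilon(t,y))=\rho_0^\epsilon(y)\exp\Bigl(-\int_0^t\partial_x u^\epsilon(s, X^\epsilon(s,y))\,ds\Bigr)\geq \epsilon\, e^{-\sqrt{T}\,\|\partial_x u^\epsilon\|_{L^2(0,T;L^\infty)}},
\]
which stays strictly positive on $[0, T]$ thanks to Lemma \ref{lemma3} and Cauchy-Schwarz in time.

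With $\rho^\epsilon$ now controlled both above and below on $[0, T]$, the momentum equation is uniformly parabolic and one can propagate the higher regularity of the data: differentiating \eqref{equ-a} produces a transport equation along trajectories for $\partial_x \rho^\epsilon$ whose forcing involves $\partial_x^2 u^\epsilon$, which is controlled in $L^2_t L^2_x$ through the effective viscous flux bound $\partial_x G\in L^2(0,T;L^2)$ of Lemma \ref{lemma3} combined with the density lower bound. This yields $\rho^\epsilon\in L^\infty(0,T;H^1)$; feeding this back into \eqref{equ-b} and invoking standard parabolic theory gives $u^\epsilon\in L^\infty(0,T;H^1)\cap L^2(0,T;H^2)$, and a routine bootstrap/continuation argument extends the local solution to $[0, T]$ for arbitrary $T>0$. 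The main technical subtlety, and the reason the authors simply cite \cite{MR2254008}, is the careful bookkeeping of which constants depend on $\epsilon$ (the density lower bound and hence the $H^1$-norm of $\rho^\epsilon$ and the $H^2$-norm of $u^\epsilon$) versus which do not ($\|\rho^\epsilon\|_{L^\infty}$ and all the $L^2$-based bounds on $\partial_x u^\epsilon$); only the latter will survive in the compactness argument of the next step when $\epsilon\to 0$.
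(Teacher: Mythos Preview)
Your proposal is correct and matches the paper's approach: the paper does not give its own argument but simply refers to \cite[p.~246]{MR2254008}, and what you have outlined---local H\"older well-posedness for strictly positive smooth data, the upper bound on $\rho^\epsilon$ from Lemma~\ref{lemma2}, the $\epsilon$-dependent lower bound on $\rho^\epsilon$ via characteristics together with the $L^2(0,T;L^\infty)$ control of $\partial_x u^\epsilon$ from Lemma~\ref{lemma3}, and a continuation argument---is exactly the content of that reference specialized to the present non-degenerate viscosity. Your closing remark distinguishing the $\epsilon$-dependent bounds (density lower bound, $H^1$ of $\rho^\epsilon$, $H^2$ of $u^\epsilon$) from the $\epsilon$-independent ones is precisely the point the paper exploits immediately afterward in \eqref{es-main1}--\eqref{es-main2}.
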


 Since $(\rho^\epsilon, u^\epsilon)$ is smooth enough and away from  vacuum,    it satisfies all the estimates given in Lemmas \ref{lemma1}--\ref{wlem}  with all the same controls independent of $\epsilon.$   In particular,
  \begin{multline}\label{es-main1}
\sup_{t\in[0,  T]}\|(\sqrt{\rho}u  ,\sqrt{t} \pl_x u^\epsilon,)(t, \cdot)  \|_{L^2}^2 +\sup_{t\in[0,  T]}\|\rho(t, \cdot)\|_{L^\infty}+ \int_{0}^{T} \| \partial_x u^\epsilon\|_{L^2}^2 \,dt\\
\leq  C(E_0,   \rho^*,  \mu_*,   \mu^*, T, )
\end{multline}
 and for any $t_0>0$
 \begin{multline}\label{es-main2}
\sup_{t\in[t_0,  T]}\|(   \pl_x u^\epsilon,  \sqrt{ \rho^\epsilon}\dot{u}^\epsilon)(t, \cdot)  \|_{L^2}^2+ \int_{t_0}^{T} \lt(\| \partial_x u^\epsilon\|_{L^\infty}^2+ \|( \pl_x G^\epsilon,   \pl_x\dot{u}^\epsilon)(t, \cdot)\|_{L^2}^2\rt)\,dt\\
\leq  C(E_0,   \rho^*,  \mu_*,   \mu^*, T, t_0),
\end{multline}
   where $G^\ep=\mu(\rho^\ep)\pl_x u^\ep- p(\rho^\ep).$

\subsection*{Step 3. Strong convergence}    At this stage,   we are ready to establish robust compactness property for the sequence of approximate solutions.           We first address on the velocity component.

Owing to the time-weighted  estimates in\eqref{es-main2} and $u|_{\pl\Omega}=\dot u^\ep|_{\pl \Omega}=0$, it follows that for any $t_0,\,T>0,$
$$ \dot u^\ep\in L^2(t_0, T; H^1(\Omega))\andf u^\ep\in L^{\infty}(t_0, T; H^1(\Omega)).$$
Then, with the aid of \eqref{es-main2} again,  we have $$ \pl_t u^\ep \in L^2(t_0, T; L^2(\Omega)),$$   which together with the Aubin-Lions lemma gives that,  up to a subsequence
\begin{equation}\label{uconinl2}
\left\{\begin{aligned}
& u^\ep\rightarrow u  \ \ {\rm weakly~ in}\ \ L^2(0, T; \Omega),\\
& u^\ep\rightarrow u \ \ {\rm strongly~ in}\ \ L^2(t_0, T; \Omega),
\end{aligned}\right.
\end{equation}
for  some $u$ satisfies \eqref{es-main1} and \eqref{es-main2}.

We are now in a good position to prove the strong convergence of the density.  We deduce from \eqref{es-main1} that, up to a subsequence,
$$\rho^\ep\to \rho\quad{\rm weakly^*~ in}\ \ L^\infty(0, T; \Omega).$$
But this is not enough to pass to the limit in the pressure term of the momentum equation. To achieve that purpose,  one can prove  some strong convergence property of the effective viscous flux $G^\ep:=\mu(\rho^\ep)\pl_x u^\ep-p^\ep.$  That is,
\begin{align}
G^\ep\rightarrow G\ \ {\rm strongly~ in}\ \ L^2(t_0, T; \Omega). \label{Gcon}
\end{align}
Indeed, from \eqref{es-main1} and \eqref{es-main2}, one has
\begin{align}G^\ep \in L^\infty(t_0, T; L^2(\Omega))\cap L^2(t_0, T; H^1(\Omega)).\label{gbasic}\end{align}
 Thanks to the  indentity
\begin{align}
G^\ep_t=&-\mu'(\rho^\ep)(\pl_x(\rho^\ep u^\ep))\pl_x u^\ep+\mu(\rho^\ep)(\pl_x \dot u^\ep-\pl_x(u^\ep \pl_x u^\ep))-p^\ep_t\notag\\
=&(-\pl_x(\mu(\rho^\ep) u^\ep)+(\mu(\rho^\ep)-\mu'(\rho^\ep)\rho^\ep))\pl_x u^\ep\notag\\
&+\mu(\rho^\ep)(\pl_x \dot u^\ep-\pl_x(u^\ep \pl_x u^\ep))+\pl_x(p(\rho^\ep) u^\ep)-(p(\rho^\ep)-p'(\rho^\ep)\rho^\ep)\pl_x u^\ep,\label{gtequ}
\end{align}
one can further deduce  that
\begin{align}
 G^\ep_t\in L^2(t_0, T; H^{-1}(\Omega)),
\end{align}
which together with \eqref{gbasic} and  the Aubin-Lions lemma gives \eqref{Gcon}.
 Now,  we have
 \begin{Proposition}\label{Lem-compa1}
Up to a subsequence,  $\rho^\ep\to \rho$ strongly in $L^1(t_0, T;   L^1(\Omega))$.
\end{Proposition}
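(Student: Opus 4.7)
The plan is to exploit the transport structure of the continuity equation in Lagrangian coordinates, combining the strong convergence $G^\ep \to G$ in $L^2((t_0, T) \times \Omega)$ from \eqref{Gcon} with the strong convergence $u^\ep \to u$ from \eqref{uconinl2}. By \eqref{es-main2}, $\pl_x u^\ep$ is uniformly bounded in $L^2(t_0, T; L^\infty(\Omega))$, so the characteristic flow $X^\ep(t, y)$ defined by $\pl_t X^\ep = u^\ep(t, X^\ep)$ with $X^\ep(t_0, y) = y$ is uniformly bi-Lipschitz on $[t_0, T] \times \overline\Omega$ (the homogeneous boundary condition keeps trajectories inside $\overline\Omega$). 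A Grönwall-type comparison using the strong $L^2$ convergence of $u^\ep$ yields $X^\ep \to X$ and its spatial inverse $Y^\ep \to Y$ uniformly on $[t_0, T] \times \overline\Omega$, along a subsequence.

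Next, multiplying the continuity equation by $\mu(\rho^\ep)/\rho^\ep$ and invoking the definition $G^\ep = \mu(\rho^\ep)\pl_x u^\ep - p(\rho^\ep)$ gives (as in the derivation leading to \eqref{rho-rho0})
\[
\pl_t \mathcal{U}(\rho^\ep) + u^\ep \pl_x \mathcal{U}(\rho^\ep) = -G^\ep - p(\rho^\ep),
\]
with $\mathcal{U}$ as in \eqref{def-mu}. Since $\mu_* > 0$, $\mathcal{U}$ is strictly increasing on $\mathbb{R}_+$ and therefore invertible on the bounded range of $\rho^\ep$, and $p \circ \mathcal{U}^{-1}$ is Lipschitz on this range. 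Setting $\phi^\ep(t, y) := \mathcal{U}(\rho^\ep(t, X^\ep(t, y)))$ turns the PDE above into the scalar Volterra ODE
\[
\tfrac{d}{dt}\phi^\ep(t, y) = - G^\ep(t, X^\ep(t, y)) - p(\mathcal{U}^{-1}(\phi^\ep(t, y))), \qquad \phi^\ep(t_0, y) = \mathcal{U}(\rho_0^\ep(y)).
\]
Because $G^\ep \to G$ strongly in $L^2$ and $X^\ep \to X$ uniformly, the source $G^\ep \circ X^\ep$ converges strongly in $L^2((t_0, T) \times \Omega)$; after extracting a Fubini subsequence, this yields $L^2$-in-time convergence for a.e.\ $y$, and standard continuous dependence for the above ODE (with Lipschitz nonlinearity) then produces $\phi^\ep(\cdot, y) \to \phi(\cdot, y)$ uniformly in $t$ for a.e.\ $y$.

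Continuity of $\mathcal{U}^{-1}$ then yields pointwise a.e.\ convergence of $\rho^\ep \circ X^\ep$, and the uniform $L^\infty$ bound \eqref{es-main1} upgrades this to strong $L^1$ convergence via dominated convergence. The uniform bi-Lipschitz property of $X^\ep$ together with the uniform convergence $Y^\ep \to Y$ then transfer this Lagrangian convergence back to the desired Eulerian convergence $\rho^\ep \to \rho$ in $L^1((t_0, T) \times \Omega)$, via change of variables and a standard approximation of test functions by continuous ones. The principal technical obstacle is this final Lagrangian-to-Eulerian transfer: because $\rho^\ep$ has no intrinsic spatial regularity beyond the $L^\infty$ bound, one must exploit the uniform bi-Lipschitz bounds on the flows and a density argument rather than a direct composition, and one must also take care that the ODE stability in the previous step is uniform enough in $y$ — handled by noting that the right-hand side defines a Carathéodory vector field whose Lipschitz modulus is time-integrable uniformly in $\ep$.
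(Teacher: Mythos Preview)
Your Lagrangian route is genuinely different from the paper's Eulerian/renormalization argument, but there is a real gap in the initial-condition step of your ODE. You start the flow at $t_0>0$ (correctly, since the uniform Lipschitz bound on $u^\ep$ in \eqref{es-main2} is only available for $t\geq t_0$), so the initial value of your Lagrangian unknown is $\phi^\ep(t_0,y)=\mathcal{U}\bigl(\rho^\ep(t_0,y)\bigr)$, \emph{not} $\mathcal{U}(\rho_0^\ep(y))$. Strong convergence of $\rho^\ep(t_0,\cdot)$ is, however, precisely (a slice of) what you are trying to prove; nothing in the a priori estimates gives spatial compactness for $\rho^\ep$ at a fixed positive time, so your ODE stability step cannot be launched. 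Starting instead from $t=0$, where $\rho_0^\ep\to\rho_0$ \emph{is} known, is also blocked: the uniform time-weighted bound $\int_0^T t\,\|\pl_x u^\ep\|_{L^\infty}^2\,dt\leq C$ from \eqref{lem411} does not give $\pl_x u^\ep\in L^1(0,T;L^\infty)$ uniformly in $\ep$, so the flows $X^\ep$ are not uniformly bi-Lipschitz from $t=0$, and neither the convergence $X^\ep\to X$ nor the strong convergence of $u^\ep$ is available on $[0,t_0]$. The Lagrangian-to-Eulerian transfer you flag as the main obstacle is in fact routine given your bi-Lipschitz bounds; the genuine obstruction lies earlier.

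The paper circumvents this circularity by staying in Eulerian coordinates and using a renormalization/convexity (effective viscous flux) argument. One writes the renormalized continuity equation for $\rho^\epn\mathcal{U}(\rho^\epn)$, replaces $\mu(\rho^\epn)\pl_x u^\epn$ by $G^\epn+p(\rho^\epn)$, and passes to the weak limit using only the strong convergences of $G^\ep$ and of $u^\ep$ on $(t_0,T)$ already secured in \eqref{Gcon} and \eqref{uconinl2}. Comparing with the renormalized equation for the limit $\rho$ (available via DiPerna--Lions since $\pl_x u\in L^2$) and invoking the convexity of $s\mapsto s\,\mathcal{U}(s)$ and $s\mapsto s\,p(s)$ from \eqref{pcon}--\eqref{mucon} forces $\overline{\rho\,\mathcal{U}(\rho)}=\rho\,\mathcal{U}(\rho)$, hence pointwise and then strong $L^1$ convergence. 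This approach never requires strong convergence of $\rho^\ep$ at any single time slice, which is exactly the missing ingredient in your scheme.
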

\begin{proof}   The previous consideration of the approximate solution sequence  ensures that there exists a subsequence $(\rho^\epn, u^\epn)_{n\in \mathbb{N}}$ of $(\rho^\ep, u^\ep)$ such that
\begin{align*}
\rho^\epn\rightharpoonup^*\rho\ \ {\rm in}\ \ L^\infty(0, T; \Omega)\ \ {\rm and}\ \ u^\epn \rightarrow u \ \ {\rm in}\ \ L^2(t_0, T; \Omega).
\end{align*}
Of course, for all $n \in \mathbb{N},$ it satisfies
\begin{align}
\rho_t^\epn+\pl_x(\rho^\epn u^\epn)=0,\label{ruepn}
\end{align}
and the limit $(\rho, u)$ satisfies
\begin{align}
\rho_t+\pl_x(\rho u)=0.\label{rulimit}
\end{align}
Applying Theorem II.3 of \cite{MR1022305}, it makes sure that $\rho$ is actually a renormalized solution of \eqref{rulimit} and satisfies  (recall from \eqref{def-mu} that $\mathcal{U}(\rho)=\int_1^\rho \mu(s)\,ds$)
\begin{align}
\pl_t(\rho\, \mathcal{U}(\rho))+\pl_x(u\, \mathcal{U}(\rho) )+\rho\,\mu(\rho)\pl_x u=0.\label{rensolu}
\end{align}
So the $(\rho^\epn, u^\epn)$ chosen in \eqref{ruepn} also implies
\begin{align}
\pl_t(\rho^\epn \mathcal{U}(\rho^\epn))+\pl_x(  u^\epn  \mathcal{U}(\rho^\epn))+\rho^\epn \mu(\rho^\epn)\pl_x u^\epn=0.
\end{align}
Next,  we replace $\mu(\rho^\epn)\pl_x u^\epn$, according to the definition of $G^\epn,$ to have
\begin{align}
\pl_t(\rho^\epn \mathcal{U}(\rho^\epn))+\pl_x(  u^\epn  \mathcal{U}(\rho^\epn))+ \rho^\epn\,p(\rho^\epn)+ \rho^\epn G^\epn=0,
\end{align}
whose  limit version is
\begin{align}
\pl_t(\overline{\rho  \,\mathcal{U}(\rho )})+\pl_x(  u \,  \mathcal{U}(\rho ))+ \overline{\rho \,p(\rho )}+ \rho\,  G =0,
\end{align}
due to  the convergence property \eqref{Gcon}.  Finally,   one can make use of the convex property of functions $z\rightarrow s\, \mathcal{U}(s)$  and  $s \rightarrow s\,p(s)$ by conditions \eqref{pcon} and \eqref{mucon},  and process a standard compactness argument as in \cite{MR4642822} to prove that
$\rho^\epn \rightarrow \rho  \ \ {\rm pointwise,}$
and thus the resut of the Proposition \ref{Lem-compa1}.
\end{proof}

\subsection*{Step 4.  Passing to the limit}
One can  pass to the limit in all the nonlinear terms of the momentum equation to conclude that $(\rho, u)$ is a solution to \eqref{equ}.    Particularly,     the following equation  is satisfied in the sense of distributions
\begin{align}
  \pl_t(\pl_{x}^{-1} (\rho u))+\rho u^2 =\mu(\rho) u_x-p(\rho) .
\end{align}
Moreover,    the conservation of mass and boundedness of density  ensure that $\rho\in C(R_+; L^p)$ for all $p<\infty.$            The solution  $(\rho, u)$  satisfy   the regularity properties stated  in Theorem \ref{thm1} thanks to \eqref{es-main1},  \eqref{es-main2},   and also  the  regularity  statement for the case that    $\partial_x u_0\in L^2$   additionally.
We complete the proof of existence part of Theorem \ref{thm1}.
\hfill $\Box$

\section{Proof of uniqueness statement}\label{S4}
In this section, we focus on the uniqueness of of solutions to problem \eqref{equ} given the initial data $0\leq \rho_0(x)\leq \rho^*, \sqrt{\rho_0} u_0\in L^2(\Omega)$ and  $\pl_x u_0 \in L^2(\Omega)$ additionally.  To avoid the loss of one derivative in
the stability analysis,  it is convenient to  perform a Lagrangian change of coordinates for the system \eqref{equ}. First of all, we introduce the Lagrangian flow map $\xi(y,t)$ as follows:
\begin{equation*}
\begin{cases}
&\pl_t\xi(y,t)=u(\xi(y,t),t),\\
&\xi(y,0)=y.
\end{cases}
\end{equation*}
 Then we can define the following terms
\begin{equation*}
\begin{aligned}
  &\tilde{\rho}(y,t):=\rho(\xi(y,t),t),\quad \tilde{u}(y,t):=u(\xi(y,t),t),\\
   &\tilde{p}(y,t):=p(\xi(y,t),t),\quad \tilde{\mu}(y,t):=\mu(\xi(y,t),t).
   \end{aligned}
\end{equation*}
Furthermore,  we denote a function $\eta=\eta(y,t)$ as
\begin{align*}
\eta(y,t)=\xi_y(y,t)=1+\int_0^t\pl_y\tilde{u}(y,\tau)d\tau,
\end{align*}
and rewrite the system \eqref{equ} in the Lagrangian coordinates as follows
\begin{equation}\label{refomulate-0}
\left\{
\begin{array}{rl}
\pl_t\tilde{\rho}+\frac{\pl_y\tilde{u}}{\eta}\tilde{\rho}&=0,\\[0.5mm]
\pl_t\tilde{\rho}\tilde{u}+\frac{\pl_y\tilde{p}}{\eta}&=\frac{1}{\eta}\pl_y\left(\frac{\tilde{\mu}\pl_y\tilde{u}}{\eta}\right).
\end{array}
\right.
\end{equation}
Since $\pl_t\eta=\pl_y\tilde{u}$, it holds from $\eqref{refomulate-0}_1$ that
$$\pl_t(\eta\tilde{\rho})=\pl_y\tilde{u}\tilde{\rho}-\eta\frac{\pl_y\tilde{u}}{\eta}\tilde{\rho}=0.$$
Setting $\tilde{\rho}(y,0)=\rho_0$ and combining with the fact that $\eta(y,0)=1$, we obtain
$$\eta\tilde{\rho}=\rho_0.$$

Next, the equations \eqref{refomulate-0} can be expressed in terms of $(\eta,\tilde{u})$ (denote still by $(\eta,u)$ without confusing) in variables $(y,t)$:
\begin{equation}\label{refomulate-1}
\left\{
\begin{array}{rl}
\pl_t\eta&=\pl_yu,\\[0.5mm]
\rho_0\pl_tu+\pl_yp\left(\frac{\rho_0}{\eta}\right)&=\pl_y\left(\mu\left(\frac{\rho_0}{\eta}\right)\frac{\pl_yu}{\eta}\right).
\end{array}
\right.
\end{equation}
with initial boundary condition
\begin{align}
&(\eta, u)(x, 0)=(1, u_0)(x), &&\ \ {\rm in}\ \ \Omega,\label{lag-ini}\\
& u=0,\ \ \ p\left(\frac{\rho_0}{\eta}\right)-\mu\left(\frac{\rho_0}{\eta}\right)\frac{\pl_y u}{\eta}=0, &&\ \ {\rm on}\ \ \pl \Omega.\label{lag-bd}
\end{align}
We point out that the system \eqref{refomulate-1} is equivalent to \eqref{equ} whenever, say,
\begin{align}
  \int_0^T\|\pl_y \tilde{u}\|_{L^{\infty}}dt<k_0<1
\end{align}
for sufficiently small $T$. Actually, we have
\begin{equation*}
\pl_y \tilde{u}(y,t)=\pl_x u(x,t)\left(1+\int_0^t\pl_y\tilde{u}(y,\tau)d\tau\right),
\end{equation*}
which yields
\begin{align}
\int_0^T\|\pl_y \tilde{u}\|_{L^{\infty}}dt\leq \frac{\int_0^T\|\pl_x u\|_{L^{\infty}}dt}{1-\int_0^T\|\pl_x u\|_{L^{\infty}}dt}<k_0<1
\end{align}
for sufficiently small $T$ by using Lemma \ref{lemma3}.

With the above preparation, now we tackle the proof of uniqueness. Let $(\eta_1,u_1)$ and $(\eta_2,u_2)$ be two solutions of system \eqref{refomulate-1}, subject to \eqref{lag-ini}-\eqref{lag-bd}, satisfying the regularities stated in
Theorem \ref{thm1}, on $[0,T_0]$, with the same initial data. Let $\delta u=u_1-u_2$ and $\delta\eta=\eta_1-\eta_2$. According to \eqref{refomulate-1}, $\delta u$ satisfies the
following equation:
\begin{align}\label{delta-u}
\rho_0\pl_t\delta u+\pl_y\left(p_1-p_2\right)=\pl_y\left(\mu_1\frac{\pl_yu_1}{\eta_1}-\mu_2\frac{\pl_yu_2}{\eta_2}\right),
\end{align}
where $p_i=p\big(\frac{\rho_0}{\eta_i}\big)$ and $\mu_i=\mu\big(\frac{\rho_0}{\eta_i}\big)$ for $i=1,2$.
Multiplying \eqref{delta-u} by $\delta u$, integrating the resultant over $\Omega$, and integrating
by parts, we have
\begin{align*}
&\frac12\frac{d}{dt}\int_{\Omega}\rho_0(\delta u)^2 dx+\int_{\Omega}\mu_2(\pl_y\delta u)^2dx+\int_{\Omega}\mu_2\frac{(\pl_y\delta u)^2(1-\eta_1)}{\eta_1}dx\\
=&-\int_{\Omega}(\mu_1-\mu_2)\frac{\pl_y u_1\pl_y\delta u}{\eta_1}dx-\int_{\Omega}\mu_2\frac{\eta_2-\eta_1}{\eta_1\eta_2}\pl_y u_2\pl_y\delta udx+\int_{\Omega}(p_1-p_2)\pl_y\delta udx.
\end{align*}
Notice that
\begin{align*}
  \|\eta_2-\eta_1\|_{L^{2}}\leq \int_0^t||\pl_y\delta u\|_{L^{2}}d\tau,\quad \forall t\in[0,T_0],
\end{align*}
which yields
\begin{align}
 \label{mu12} \|\mu_1-\mu_2\|_{L^{2}}\leq C\|\rho_0\|_{L^{\infty}}||\eta_2-\eta_1\|_{L^{2}}\leq C\int_0^t||\pl_y\delta u\|_{L^{2}}d\tau,\quad \forall t\in[0,T_0],\\
  \label{p12}\|p_1-p_2\|_{L^{2}}\leq C\|\rho_0\|_{L^{\infty}}||\eta_2-\eta_1\|_{L^{2}}\leq C\int_0^t||\pl_y\delta u\|_{L^{2}}d\tau,\quad \forall t\in[0,T_0].
\end{align}
It follows from the Young and Sobolev embedding inequalities, \eqref{mu12}-\eqref{p12}, that
\begin{align*}
&\frac{d}{dt}\int_{\Omega}\rho_0(\delta u)^2 dx+\int_{\Omega}(\pl_y\delta u)^2dx\\
\leq& C\int_0^t\|\pl_y\delta u\|_{L^{2}}d\tau(\|\pl_y u_1\|_{L^{\infty}}+\|\pl_y u_2\|_{L^{\infty}}+1)\left\|\pl_y\delta u\right\|_{L^{2}}\notag\\
&+C\left\|\pl_y\delta u\right\|_{L^{2}}^2\int_0^t||\pl_y u_1\|_{L^\infty}d\tau.
\end{align*}
Finally, for sufficiently small $T_0$ such that
$$C\int_0^t \|\pl_y u_1\|_{L^\infty} d\tau\leq \frac{1}{4},$$
we use Cauchy inequality to obtain
\begin{align}
&\frac{d}{dt}\int_{\Omega}\rho_0(\delta u)^2 dx+\int_{\Omega}(\pl_y\delta u)^2dx\notag\\
\leq& Ct\int_0^t\|\pl_y\delta u\|_{L^2}^2d\tau\left(1+\|\pl_y u_1\|_{L^{\infty}}^2+\|\pl_y u_2\|_{L^{\infty}}^2\right),
\end{align}
for some constant $C$ that depends on $k_0, M_0, E_0, \mu^* $ and $\mu_*^{-1}$.

In order to prove the uniqueness on $[0,T_0]$ by applying Gronwall's Lemma, it is crucial to have
\begin{align}
\int_0^{T_0}t\|\pl_y u_i\|_{L^{\infty}}^2dt<+\infty,\quad i=1,2,
\end{align}
which is equivalent to
\begin{align}
\int_0^{T_0}t\|\pl_x u_i\|_{L^{\infty}}^2dt<+\infty,\quad i=1,2.
\end{align}
Note that this is guaranteed by Lemma \ref{lemma4}, so the uniqueness of solutions is proved.

\section{Long-time behavior of rough solutions}
In this section,   we  first  derive the decay estimates of $\|e\|_{L^1}$ and $\|\sqrt{\rho}u\|_{L^2}$ in the following proposition.
\begin{Proposition}\label{decay1}
Consider weak solutions to system \eqref{equ} subject to initial conditions \eqref{initialdata-Cond1} and \eqref{initialdata-Cond2}. The following estimate holds:
\begin{equation}\label{decay1-est}
  \|e(t, \cdot)\|_{L^1}+\| u(t, \cdot)\|_{L^2}\leq Ce^{-\alpha t},\quad \forall t\geq t_0>0,
\end{equation}
for some constants $C,\alpha$ depending only on $E_0, \mu_*, \mu^*, \rho^*$.
\end{Proposition}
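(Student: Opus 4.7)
The plan is to run a Bogovskii-type modified-energy argument anchored on the auxiliary function
\[
v(t,x) := \int_0^x(\rho(t,y)-1)\,dy.
\]
Mass conservation \eqref{Laws} forces $\int_\Omega(\rho-1)\,dx=0$, so $v$ vanishes on $\partial\Omega$; the continuity equation together with $u|_{\partial\Omega}=0$ then gives $\partial_t v=-\rho u$. Testing the momentum equation \eqref{equ-b} against $v$ and integrating by parts (all boundary contributions drop since $v|_{\partial\Omega}=0$), I would obtain the identity
\begin{align*}
\int_\Omega p(\rho)(\rho-1)\,dx = \frac{d}{dt}\int_\Omega\rho u v\,dx + \int_\Omega(\rho u)^2\,dx - \int_\Omega\rho u^2(\rho-1)\,dx + \int_\Omega\mu(\rho)\partial_x u(\rho-1)\,dx.
\end{align*}

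Next I would exploit the convexity of $p$ in \eqref{pcon}. Since $\int_\Omega(\rho-1)\,dx=0$, the left-hand side equals $\int_\Omega(p(\rho)-p(1))(\rho-1)\,dx$, and a direct compactness argument on the bounded range $[0,\rho^*]$ (relying on $p'(1)>0$, as for the polytropic laws in the statement) yields a coercivity bound $\int_\Omega p(\rho)(\rho-1)\,dx\geq C_0\|\rho-1\|_{L^2}^2$, which is comparable to $\|e\|_{L^1}$. On the right-hand side, the two cubic terms are dominated by $C(\rho^*)\|\sqrt\rho u\|_{L^2}^2$ thanks to the $L^\infty$ bound on $\rho$, while the viscous term is split via Young's inequality into $\tfrac{\epsilon}{2}\|\rho-1\|_{L^2}^2+C_\epsilon\|\partial_x u\|_{L^2}^2$.

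The core step is then to introduce the perturbed energy $\mathcal F(t):=E(t)-\delta\int_\Omega\rho u v\,dx$ with $\delta>0$ small. Using $\|v\|_{L^2}\leq C\|\rho-1\|_{L^2}$ (Poincaré), Cauchy--Schwarz gives $|\delta\int_\Omega\rho u v\,dx|\leq \delta C(\|\sqrt\rho u\|_{L^2}^2+\|\rho-1\|_{L^2}^2)\leq\delta C'E(t)$, so $\mathcal F\sim E$ for $\delta$ small (here one also uses the stated equivalence $\|e\|_{L^1}\sim\|\rho-1\|_{L^2}^2$). Combining the differential form of \eqref{balance} with $-\delta$ times the identity above, absorbing the cross terms through Poincaré $\|\sqrt\rho u\|_{L^2}^2\leq\rho^*C_P\|\partial_x u\|_{L^2}^2$, and then choosing $\delta$ and $\epsilon$ small enough, I expect to arrive at
\begin{align*}
\frac{d\mathcal F}{dt}+c_1\|\partial_x u\|_{L^2}^2+c_2\|\rho-1\|_{L^2}^2 \leq 0.
\end{align*}
Since $\mathcal F \lesssim \|\partial_x u\|_{L^2}^2+\|\rho-1\|_{L^2}^2$ (again by Poincaré), this closes as $\frac{d\mathcal F}{dt}+c\mathcal F\leq 0$, producing $E(t)\leq Ce^{-\alpha t}$ and hence exponential decay of $\|e(t,\cdot)\|_{L^1}$ and $\|\sqrt\rho u(t,\cdot)\|_{L^2}$.

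To upgrade the weighted norm $\|\sqrt\rho u\|_{L^2}$ to the full $\|u\|_{L^2}$, which is the one place the possible vacuum is felt, I would use the splitting $\|u\|_{L^2}^2=\|\sqrt\rho u\|_{L^2}^2+\int_\Omega(1-\rho)u^2\,dx$ together with the one-dimensional Sobolev bound $\|u\|_{L^\infty}\leq\|\partial_x u\|_{L^2}$ (valid since $u|_{\partial\Omega}=0$), which gives $|\int_\Omega(1-\rho)u^2\,dx|\leq\|1-\rho\|_{L^2}\|\partial_x u\|_{L^2}\|u\|_{L^2}$. Resolving the resulting quadratic inequality yields $\|u\|_{L^2}\leq C(\|\sqrt\rho u\|_{L^2}+\|1-\rho\|_{L^2}\|\partial_x u\|_{L^2})$. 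The main obstacle will be securing a time-uniform bound $\|\partial_x u(t,\cdot)\|_{L^2}\leq K$ for $t\geq t_0$; this is where the argument interacts most delicately with Lemmas~\ref{lemma3}--\ref{lemma4}, and one expects to close it by a bootstrap in which the already-proven exponential decay of $E$ controls the nonlinear Gronwall term $(\|\partial_x u\|_{L^\infty}+1)\|\partial_x u\|_{L^2}^2$ appearing in the proof of Lemma~\ref{lemma3}, making the $T$-dependent constant there uniform in $T$.
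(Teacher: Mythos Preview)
Your proposal is correct and follows essentially the same route as the paper. Your auxiliary function $v=\partial_x^{-1}(\rho-1)$ is the dual (via one integration by parts) of the paper's quantity $\partial_x^{-1}(\rho u)$: since $\partial_t v=-\rho u$ and $\partial_x v=a$, one has $\int_\Omega \rho u\,v\,dx=-\int_\Omega \partial_x^{-1}(\rho u)\,a\,dx$, so your Lyapunov functional $\mathcal F=E-\delta\int\rho u v$ coincides with the paper's $\int(e+\rho u^2+\eta\,\partial_x^{-1}(\rho u)\,a)$, and the identity you obtain by testing the momentum equation against $v$ is exactly the paper's identity obtained by multiplying the pressure representation $p=-\partial_t\partial_x^{-1}(\rho u)-\rho u^2+\mu\partial_x u$ by $a$; your direct use of Poincar\'e via $u|_{\partial\Omega}=0$ is in fact a slight simplification over the paper's detour through $\bar u$ and conservation of momentum, and your identification of the uniform-in-time bound on $\|\partial_x u\|_{L^2}$ as the delicate step in the upgrade to $\|u\|_{L^2}$ matches the paper's (somewhat terse) appeal to Lemma~\ref{lemma3} at the end.
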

\begin{proof}
Recall from Lemma \ref{lemma2} that $0\leq \rho\leq \overline{\rho}:=\|\rho\|_{L^\infty(\R_+\times \Omega)}$ with $\overline{\rho}$ depending only on $E_0, \rho^*, \mu_*, \mu^*,$ and from Lemma \ref{lemma1} that
\begin{align}\label{recal-1}
\frac{1}{2}\frac{d}{dt}\int_\Omega [ 2 e(\rho)+\rho u^2] dx +\int_\Omega\mu(\rho) |\pl_x u|^2 dx=0.
\end{align}
Define $a:=\rho-1.$ Since $p(\rho)$ is nonnegative,  nondecreasing and convex, then there exists a universal positive constant $C$ such that    $a \,p\geq C( a+a^2),$ and thus  $\|\rho(t,\cdot)\|_{L^1(\Omega)}=1$ implies that
\begin{align*}
 C \int_{\Omega}a^2dx\leq \int_{\Omega}a\,p\,dx.
\end{align*}
Since the equation $p=-\partial_x^{-1}(\rho u)_t-\rho u^2+\mu(\rho) u_x$ is satisfied, we obtain
\begin{align}\label{es-500}
  \int_{\Omega}apdx&=-\frac{d}{dt}\int_{\Omega}\partial_x^{-1}(\rho u)a\,dx-\int_{\Omega}\partial_x^{-1}(\rho u)\partial_x(\rho u)\,dx-\int_{\Omega}a\rho u^2\,dx+\int_{\Omega}\mu(\rho) a\partial_xu\,dx\notag\\
  &\leq -\frac{d}{dt}\int_{\Omega}\partial_x^{-1}(\rho u)a\,dx+(1+2\overline{\rho})\|\sqrt{\rho} u\|_{L^2}^2+\frac{C}{2}\|a\|_{L^2}^2+\frac{\mu^*}{2C}\left\|\sqrt{\mu(\rho)}\partial_x u\right\|_{L^2}^2.
\end{align}
In order to control the second term on the right-hand side of the above inequality, we notice the following identity for the mean of velocity $\bar u(t):=\int_\Omega u(t, x)\,dx:$
\begin{equation*}
    \bar u(t)= \int_\Omega \rho(t, x) \bar u(t)\,dx -\int_\Omega (\rho u)(t, x)\,dx,
\end{equation*}
which  together with Poincar\'{e}'s inequality implies that
\begin{equation}\label{decay000}
    |\bar u(t)|\leq \int_\Omega \rho(t, x) |u(t, x)-\bar u(t)|\,dx\leq \bar\rho \,\|\partial_x u\|_{L^2}.
\end{equation}
Hence,
\begin{align}\label{poincare}
  \|\sqrt{\rho} u\|_{L^2}\leq&  \|\sqrt{\rho} (u-\bar u)\|_{L^2} +   |\bar u|\,\|\sqrt{\rho}  \|_{L^2}\notag\\
  \leq&  \sqrt{\bar \rho}  \|\partial_x u\|_{L^2}+ \bar\rho \,\|\partial_x u\|_{L^2}\notag\\
    \leq&   2\frac{ \bar{\rho}+1}{\mu_*}\|\sqrt{\mu(\rho)}\partial_x u\|_{L^2}.
\end{align}
Now, taking \eqref{poincare} into \eqref{es-500}  gives that
\begin{align}\label{a-2}
 \frac{d}{dt}\int_{\Omega}\partial_x^{-1}(\rho u)a\,dx + \frac{C}{2}\int_{\Omega}a^2\,dx  \leq  \left(8\frac{ (\bar{\rho}+1)^3}{\mu_*^2}+ \frac{\mu^*}{2C}\right)\|\sqrt{\mu(\rho)}\partial_x u\|_{L^2}^2.
\end{align}

It  can be seen from the fact that $\|e\|_{L^1}$ is equivalent to $\|a\|_{L^2}$  and \eqref{recal-1}, \eqref{poincare} and \eqref{a-2} that
for  small enough $\eta>0,$ it holds that
\begin{align}\label{basic-decay}
  \frac{d}{dt}\int_\Omega \left(e(\rho)+\rho u^2 + \eta\partial_x^{-1}(\rho u)\,a\right)dx + C\eta\int_{\Omega}(e(\rho)+ \rho u^2)dx\leq 0.
\end{align}
Noticing that by H\"{o}lder's inequality,
\begin{align*}
 \lt| \int_\Omega \partial_x^{-1}(\rho u)a\, dx\rt|\leq  \sqrt{\bar{\rho}}\,\|\sqrt{\rho} u\|_{L^2}\,\|a\|_{L^2}.
\end{align*}
This  finally implies that the above energy functional is  equivalent to $\|e\|_{L^1}+ \|\rho u\|_{L^2}^2,$ and hence the estimate
\begin{equation}\label{decay11-est}
  \|e\|_{L^1}+\|\sqrt{\rho}u\|_{L^2}\leq Ce^{-\alpha t},\quad \forall t\in\mathbb{R}_+.
\end{equation}
To show decay estimate of the velocity,  we notice from \eqref{decay11-est} that
\begin{align*}
\int_\Omega u^2(t, x)\,dx\leq& \int_\Omega \rho u^2(t, x)\,dx+ \int_\Omega |(1-\rho)| u^2(t, x)\,dx\\
\leq&  \|\sqrt{\rho} u(t)\|_{L^2}^2 + \|\rho-1\|_{L^2}\|u(t, \cdot)\|_{L^4}^2\\
\leq& C e^{-\alpha t}
\end{align*}
for any $t\geq t_0>0,$ where in the last inequality we used Lemma \ref{lemma3} and \eqref{decay11-est}.
This completes the proof of Proposition \ref{decay1}.
\end{proof}

\bigbreak

We can further derive an exponential decay estimates for $\|\pl_x u\|_{L^2}$ by taking use of Proposition \ref{decay1}
and under the assumption that viscosity coefficient $\mu$ \textit{is a constant}.      This is based on the following lemma.
\begin{Lemma}\label{ux-decay}
Assume that the viscosity coefficient $\mu$ is constant. Let the pressure $p$ satisfies \eqref{pcon} and $p(0)=0.$  Consider solutions to system \eqref{equ} subject to initial conditions \eqref{initialdata-Cond1} and \eqref{initialdata-Cond2}. Under the additional regularity assumption that $\partial_x u_0\in L^2(\Omega)$, there exists a positive constant $A_1$ such that for all $(t,x)\in \mathbb{R}_+\times \Omega$ the following estimate holds:
\begin{align}
    &\frac{d}{dt}\int_\Omega\left[A_1\rho|u|^4+\frac{\mu}{2} |\pl_xu|^2-(p-p(1))\pl_xu+\frac{1}{2\mu}\left(p^2-p^2(1)-2p(1)p'(1) a\right)\right]\, dx \notag\\
      &\hspace{1cm}+ \frac{1}{2}\int_\Omega \rho |\pl_tu|^2dx+\int_\Omega |u|^2|\pl_x u|^2 dx
  \leq  C(\overline{\rho})  \int_\Omega a^2dx +C(\overline{\rho}) \int_\Omega |\pl_x u|^2 dx .\label{ux-decay-est}
\end{align}
\end{Lemma}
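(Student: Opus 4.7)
The plan is to combine three energy-type identities — obtained by testing \eqref{equ-b} (with $\mu$ constant) against $u^3$, against $\pl_t u$, and differentiating a pressure-squared functional via \eqref{equ-a} — and then to absorb every cross term into the two dissipations $\int_\Omega \rho (\pl_t u)^2\,dx$ and $\int_\Omega u^2 (\pl_x u)^2\,dx$ that are to sit on the left-hand side of \eqref{ux-decay-est}.

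First, multiplying $\rho \dot u + \pl_x p = \mu\, \pl_x^2 u$ by $u^3$, using continuity to identify $\int_\Omega \rho u^3 \dot u\,dx = \frac{1}{4}\frac{d}{dt}\!\int_\Omega \rho u^4\,dx$, and using $u|_{\pl\Omega}=0$, one obtains
\[
\frac{1}{4}\frac{d}{dt}\!\int_\Omega \rho u^4\,dx + 3\mu \!\int_\Omega u^2 (\pl_x u)^2\,dx = 3 \!\int_\Omega p\, u^2\, \pl_x u\,dx.
\]
Testing the momentum equation against $\pl_t u$ (which vanishes on $\pl\Omega$ since $u|_{\pl\Omega}=0$ for all $t$), using $\int \pl_x p \,\pl_t u\,dx = -\frac{d}{dt}\!\int (p-p(1))\pl_x u\,dx + \int \pl_t p\, \pl_x u\,dx$, and then substituting the transport identity $\pl_t p + u \pl_x p + p'(\rho)\rho \pl_x u = 0$ (which follows from \eqref{equ-a}) yields
\[
\frac{d}{dt}\!\Bigl[\tfrac{\mu}{2}\|\pl_x u\|_{L^2}^2 - \!\int_\Omega (p-p(1))\pl_x u\,dx\Bigr] + \!\int_\Omega \rho (\pl_t u)^2\,dx = -\!\int_\Omega \rho u (\pl_x u)(\pl_t u)\,dx + \!\int_\Omega u\, \pl_x p\, \pl_x u\,dx + \!\int_\Omega p'(\rho)\rho (\pl_x u)^2 dx.
\]
Finally, since mass conservation gives $\int_\Omega a\,dx \equiv 0$, differentiating $\frac{1}{2\mu}\int_\Omega(p^2 - p(1)^2 - 2p(1)p'(1)a)\,dx$ and invoking the same transport identity for $p$ produces
\[
\frac{1}{2\mu}\frac{d}{dt}\!\int_\Omega\bigl(p^2 - p(1)^2 - 2p(1)p'(1)a\bigr) dx = \frac{1}{2\mu}\!\int_\Omega (\pl_x u)\, p^2\,dx - \frac{1}{\mu}\!\int_\Omega p\, p'(\rho)\rho\, \pl_x u\,dx.
\]
Adding $4A_1$ times the first identity to the second and third reproduces the time derivative of the left-hand side of \eqref{ux-decay-est} together with the two dissipations $\int_\Omega \rho (\pl_t u)^2\,dx + 12 A_1\mu \!\int_\Omega u^2 (\pl_x u)^2\,dx$. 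The remaining cross terms are then bounded by Cauchy--Schwarz, by the pointwise bound $|p(\rho)-p(1)| + |p^2(\rho)-p(1)^2| \leq C(\overline{\rho})|a|$, and by the Poincaré inequality $\|u\|_{L^2} \leq C\|\pl_x u\|_{L^2}$ (valid because $u|_{\pl\Omega}=0$); choosing $A_1 = A_1(\overline{\rho}, \mu)$ large enough that $12 A_1 \mu$ absorbs the $\int u^2(\pl_x u)^2$ coefficient produced on the RHS, and Young parameters small enough that $\tfrac{1}{2}\int \rho (\pl_t u)^2$ survives on the LHS, delivers \eqref{ux-decay-est}.

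The truly delicate term is $\int_\Omega u\, \pl_x p\,\pl_x u\,dx$: the naive substitution $\pl_x p = \mu \pl_x^2 u - \rho \dot u$, combined with $\int u (\pl_x u)(\pl_x^2 u)\,dx = -\tfrac{1}{2}\!\int (\pl_x u)^3\,dx$, introduces a cubic integral in $\pl_x u$ which cannot be bounded by the quadratic right-hand side of \eqref{ux-decay-est}. The remedy is to write $u\, \pl_x p = \pl_x(up) - (\pl_x u)p$, integrate by parts (no boundary contribution since $u|_{\pl\Omega}=0$) to obtain $-\int up\, \pl_x^2 u\,dx - \int p(\pl_x u)^2\,dx$, and only then to substitute $\pl_x^2 u = \tfrac{1}{\mu}(\rho \pl_t u + \rho u \pl_x u + \pl_x p)$ inside the $\int up\,\pl_x^2 u\,dx$ term. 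The cancellation is that no $(\pl_x u)^3$ appears; what remains are terms of the form $\int \rho p\, u\, \pl_t u$, $\int \rho p\, u^2 \pl_x u$, $\int (\pl_x u) p^2$ and $\int p(\pl_x u)^2$, each closing against the two dissipations plus $C(\overline{\rho})\bigl(\int a^2 + \int (\pl_x u)^2\bigr)dx$ by the same Cauchy--Schwarz/Poincaré toolkit. Avoiding the cubic $(\pl_x u)^3$ by this two-step IBP is the central technical point.
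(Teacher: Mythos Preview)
Your proof is correct and follows essentially the same route as the paper: multiply the momentum equation by $u^3$ and by $\pl_t u$, extract the $\frac{1}{2\mu}\frac{d}{dt}\int p^2\,dx$ piece, and absorb the cross terms into the two dissipations by choosing $A_1$ large. The only cosmetic difference is that the paper organises the ``delicate'' step through the effective viscous flux $G=\mu\pl_x u-p$ (splitting $\pl_x u=(G+p)/\mu$ in $\int \pl_t p\,\pl_x u$ and then using $\pl_x G=\rho\dot u$), whereas you arrive at the very same terms by writing $u\,\pl_x p=\pl_x(up)-p\,\pl_x u$, integrating by parts, and substituting $\mu\pl_x^2 u=\rho\dot u+\pl_x p$; unwinding either computation yields the identical list of remainder integrals $\int\rho p\,u\,\pl_t u$, $\int\rho p\,u^2\pl_x u$, $\int p^2\pl_x u$, $\int p(\pl_x u)^2$, $\int\rho p'(\pl_x u)^2$, $\int\rho p'p\,\pl_x u$, estimated exactly as you indicate.
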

\begin{proof}
Multiply \eqref{equ-b} by $4u^3$, and integrate with respect to space variable over $\Omega$ to obtain
\begin{align*}
\frac{d}{dt}\int_\Omega \rho |u|^4 dx + 12 \int_\Omega \mu  |u|^2|\pl_x u|^2 dx&= 12 \int_I p\pl_xu|u|^2dx\\
&\leq 6\int_\Omega \mu |u|^2|\pl_x u|^2 dx+C \|p^2(\rho)/\rho\|_{L^\infty}\frac{\bar{\rho}^2+\bar \rho}{\mu}\int_\Omega |\pl_xu|^2dx,
\end{align*}
where we have used $p(0)=0,p\in C^1$ and \eqref{poincare}. Then, we have
\begin{align}\label{u-4}
\frac{d}{dt}\int_\Omega \rho |u|^4 dx + 6 \int_\Omega \mu |u|^2|\pl_x u|^2 dx
\leq C(\|p^2(\rho)/\rho\|_{L^\infty} )\frac{\bar \rho^2+\bar\rho}{\mu}\int_\Omega |\pl_xu|^2dx.
\end{align}

Next, multiplying \eqref{equ-b} by $\pl_tu$, and integrating the resultant with respect to space variable over $\Omega$ gives
\begin{align}\label{u-t}
  \frac{1}{2}\frac{d}{dt}\int_\Omega \mu |\pl_xu|^2dx+ \int_\Omega \rho |\pl_tu|^2dx
=-\int_\Omega \rho u\pl_x u\pl_tudx+\int_\Omega p\pl_{xt}u dx.
\end{align}
The first term on the right-hand side of the above identity can be estimated as follows:
\begin{equation*}
  \int_\Omega \rho u\pl_xu\pl_tudx\leq \frac14\int_\Omega \rho |\pl_tu|^2dx+4\overline{\rho}\int_\Omega |u|^2|\pl_xu|^2dx.
\end{equation*}
For the second term, we have
\begin{align}\label{second}
  \int_\Omega p\pl_{xt}u=\frac{d}{dt}\int_\Omega (p-1)\pl_xu dx-\int_\Omega \pl_tp\pl_xudx.
\end{align}
Recall that the pressure $p$ satisfies
\begin{align*}
\pl_t p+\pl_x (pu)+h \pl_x u=0,
\end{align*}
where $h=\rho p'-p$. We can decompose the last term of \eqref{second} as
\begin{align*}
  \int_\Omega \pl_tp\pl_xudx&=\int_\Omega \frac{1}{\mu} \pl_tpp dx+\int_\Omega \frac{1}{\mu}\pl_tp Gdx\\
  &=\frac{1}{2\mu}\frac{d}{dt}\int_\Omega \left(p^2-p^2(1)-2p(1)p'(1) a\right)dx+\frac{1}{\mu}\int_\Omega pu\pl_xGdx-\frac{1}{\mu}\int_\Omega h\pl_xuGdx.
\end{align*}
Notice that $\pl_xG=\rho(\pl_tu+u\pl_xu)$, we apply Young's inequality and \eqref{poincare} to get
\begin{align*}
  \frac{1}{\mu}\int_\Omega pu\pl_xGdx&= \frac{1}{\mu} \int_\Omega pu\rho(\pl_tu+u\pl_xu)dx\\
  &\leq \frac{1}{4}\int_\Omega \rho|\pl_tu|^2dx+\frac{5}{\mu^2}p^2(\bar \rho)(\bar \rho^2+\bar \rho)\int_\Omega |\pl_xu|^2dx+\overline{\rho}\int_\Omega|u|^2|\pl_xu|^2dx.
\end{align*}
Using $u|_{\pl \Omega}=0,$ we derive that
\begin{align*}
  \int_\Omega h \pl_xu Gdx&\leq\|h\|_{L^\infty}\int_\Omega |\pl_xu|^2dx+\left|\int_ \Omega(p h-p(1)h(1))\pl_xudx\right|\\
  &\leq \|h\|_{L^\infty}\int_\Omega |\pl_xu|^2dx+C(\bar\rho) \int_\Omega a^2dx+\int_\Omega |\pl_xu|^2dx
\end{align*}
We combine the above estimates with \eqref{u-t} to obtain
\begin{align}\notag
 \frac{d}{dt}&\int_\Omega\left[\frac{\mu}{2} |\pl_xu|^2-(p-1)\pl_xu+\frac{1}{2\mu}\left(p^2-p^2(1)-2p(1)p'(1) a\right)\right]dx+ \frac{1}{2}\int_\Omega \rho |\pl_tu|^2dx\notag\\
  &\leq 5\overline{\rho}\int_\Omega|u|^2|\pl_xu|^2dx+C(\bar\rho) \int_\Omega a^2dx\notag\\
 & +\frac{1}{\mu}\lt( C(\|p^2(\rho)/\rho\|_{L^\infty} ){(\bar \rho^2+\bar\rho)}+\|h\|_{L^\infty}+\frac{5}{\mu}p^2(\bar \rho)(\bar\rho^2+\bar\rho)+1\rt)\int_\Omega|\pl_xu|^2dx.\label{u-t-1}
\end{align}

Finally, we calculate $A_1\times \eqref{u-4}+\eqref{u-t-1}$ and choose $A_1$ such that  $6\mu A_1=5\overline{\rho}+1$. Hence we obtain \eqref{ux-decay-est} with the help of the boundedness of $\|h\|_{L^\infty}$ and $\|p^2/\rho\|_{L^\infty}.$

\end{proof}

Now, multiplying \eqref{basic-decay} by a large enough  constant $A_2$ then adding to \eqref{ux-decay-est}, we can achieve
\begin{equation*}
  \frac{d}{dt}\mathfrak{E}_1(t)+\int_\Omega\left(a^2+|\pl_x u|^2+ \rho |\pl_tu|^2+ |u|^2|\pl_x u|^2 \right)dx\leq 0,
\end{equation*}
with
\begin{align*}
  \mathfrak{E}_1(t):=\int_\Omega &\left[A_2\left(e(\rho)+\rho u^2\right)+A_1\rho|u|^4+\frac{\mu}{2} |\pl_xu|^2\right.\\
  &\left.-(p-p(1))\pl_xu+\frac{1}{2\mu}\left(p^2-p^2(1)-2p'(1)p(1)a\right)\right]dx.
\end{align*}
Notice that
\begin{equation*}
  \left|\int_\Omega(p-1)\pl_xudx\right|\leq \frac{p'(\xi)^2}{\mu}\int_\Omega a^2dx+\frac{\mu}{4}\int_\Omega|\pl_xu|^2dx,
\end{equation*}
for some $\xi\in[0,\bar\rho]$. By choosing $A_2$ large enough, we have
\begin{equation*}
  \mathfrak{E}_1\thicksim \int_\Omega\left(e(\rho)+\rho u^2+\rho|u|^4+|\pl_xu|^2\right)dx.
\end{equation*}
Observing that
\begin{equation*}
  \int_\Omega\rho u^4dx\leq 4\overline{\rho}^2\int_\Omega|u|^2|\pl_xu|^2dx+\left(\int_\Omega\rho u^2dx\right)^2,
\end{equation*}
which combines with the exponential decay estimates for $\|\sqrt{\rho}u\|_{L^2}$ in \eqref{decay1-est} yields the following proposition.
\begin{Proposition}\label{decay2}
Under the assumptions of Lemma \ref{ux-decay}, the following estimate holds:
\begin{equation}\label{decay2-est}
  \|\pl_xu\|_{L^2} \leq Ce^{-\alpha t},\quad \forall t\in\mathbb{R}_+,\, x\in\Omega,
\end{equation}
for some constants $C,\alpha$ depending only on $M_0, E_0, \mu, \rho^*$.
\end{Proposition}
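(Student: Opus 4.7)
\textbf{Proof proposal for Proposition \ref{decay2}.} The plan is to construct a Lyapunov functional $\mathfrak{E}_1(t)$ equivalent to $\|e\|_{L^1}+\|\sqrt{\rho}u\|_{L^2}^2+\|\sqrt{\rho}\,u^2\|_{L^2}^2+\|\partial_xu\|_{L^2}^2$, derive a differential inequality for it with a strong dissipation, and then upgrade it to exponential decay by feeding in the already-known decay from Proposition \ref{decay1}.

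First, I would multiply \eqref{basic-decay} (from the proof of Proposition \ref{decay1}) by a large constant $A_2>0$ and add it to \eqref{ux-decay-est}. Choosing $A_2$ large enough so that $CA_2\int_\Omega(e(\rho)+\rho u^2)\,dx$ absorbs the right-hand side $C(\bar\rho)\int_\Omega a^2\,dx+C(\bar\rho)\int_\Omega|\partial_xu|^2\,dx$ of \eqref{ux-decay-est} (via $e(\rho)\sim a^2$ and the Poincar\'e inequality \eqref{poincare}), one obtains
\[
\frac{d}{dt}\mathfrak{E}_1(t)+\int_\Omega\bigl(a^2+|\partial_xu|^2+\rho|\partial_tu|^2+|u|^2|\partial_xu|^2\bigr)dx\leq 0,
\]
with $\mathfrak{E}_1$ as already defined in the paragraph preceding the proposition.

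Next, I would check that $\mathfrak{E}_1\sim\int_\Omega(e(\rho)+\rho u^2+\rho|u|^4+|\partial_xu|^2)\,dx$. The cross term $(p-p(1))\partial_xu$ is controlled by Young's inequality as indicated in the text: $\bigl|\int_\Omega(p-p(1))\partial_xu\,dx\bigr|\leq\tfrac{p'(\xi)^2}{\mu}\int_\Omega a^2\,dx+\tfrac{\mu}{4}\int_\Omega|\partial_xu|^2\,dx$ for some $\xi\in[0,\bar\rho]$. The term $p^2-p^2(1)-2p(1)p'(1)a$ is nonnegative modulo a bounded multiple of $a^2$ (Taylor expansion of the convex function $p^2$ around $\rho=1$). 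Then for $A_2$ large, the leading $A_2(e(\rho)+\rho u^2)$ dominates all cross and correction terms, giving the equivalence.

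The crucial step is then to close the differential inequality into exponential decay. Note that the dissipation $\int_\Omega(a^2+|\partial_xu|^2+|u|^2|\partial_xu|^2+\rho|\partial_tu|^2)\,dx$ does not directly bound the $\rho|u|^4$ piece of $\mathfrak{E}_1$. To handle this, I would use the elementary identity
\[
\int_\Omega\rho u^4\,dx\leq 4\bar\rho^2\int_\Omega|u|^2|\partial_xu|^2\,dx+\Bigl(\int_\Omega\rho u^2\,dx\Bigr)^2,
\]
which follows by writing $u^2(x)=2\int_0^x u\,\partial_x u\,dy+u^2$ together with the mean-value identity used in \eqref{decay000}. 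The first term on the right is absorbed by the $|u|^2|\partial_xu|^2$ dissipation, and the second is $\|\sqrt{\rho}u\|_{L^2}^4\leq Ce^{-4\alpha t}$ by Proposition \ref{decay1}. The resulting inequality $\tfrac{d}{dt}\mathfrak{E}_1+c\,\mathfrak{E}_1\leq Ce^{-4\alpha t}$ integrates (via a Gronwall argument with source) to give $\mathfrak{E}_1(t)\lesssim e^{-\alpha' t}$, and thus $\|\partial_xu\|_{L^2}\leq Ce^{-\alpha t}$.

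The main obstacle will be the $\rho|u|^4$ term appearing in $\mathfrak{E}_1$ but not in the dissipation; without the exponential decay already established in Proposition \ref{decay1} for $\|\sqrt{\rho}u\|_{L^2}$ one could only hope for polynomial decay. This is precisely where the constancy of $\mu$ is used (through Lemma \ref{ux-decay}), since the cancellations used there to derive \eqref{ux-decay-est} rely on $\mu'\equiv 0$.
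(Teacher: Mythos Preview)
Your proposal is correct and follows essentially the same route as the paper: combine $A_2\times\eqref{basic-decay}$ with \eqref{ux-decay-est}, verify the equivalence $\mathfrak{E}_1\sim\int_\Omega(e(\rho)+\rho u^2+\rho|u|^4+|\partial_xu|^2)\,dx$, use the interpolation $\int_\Omega\rho u^4\,dx\le 4\bar\rho^2\int_\Omega|u|^2|\partial_xu|^2\,dx+\bigl(\int_\Omega\rho u^2\,dx\bigr)^2$, and close with the already-established decay of $\|\sqrt{\rho}u\|_{L^2}$ from Proposition~\ref{decay1} via a Gronwall argument with exponentially decaying source. One small correction: the Poincar\'e inequality \eqref{poincare} goes the wrong way for $\int_\Omega\rho u^2\,dx$ to absorb $C(\bar\rho)\int_\Omega|\partial_xu|^2\,dx$; the absorption actually works because the full dissipation $\int_\Omega\mu|\partial_xu|^2\,dx$ from the energy balance \eqref{recal-1} is still available when forming the combination leading to \eqref{basic-decay}.
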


\section{Proof of Corollary \ref{cor1}}
For expository reasons, here we will only prove the vacuum bubble dynamics, i.e. inward expanding  flow. The analysis for the density patch dynamics (outward expanding  flow) can be done similarly.    Use $x=a(t)$ and $x=b(t)$ to denote the particle paths start from $a_0$ and $b_0$,
respectively,  which are the interfaces between the gas and the vacuum, i.e.
\begin{equation}\label{ODE}
\left\{\begin{aligned}
&\rho=0,\quad x\in[a(t),b(t)],\quad t\geq0,\\
&\frac{\mathrm{d} a(t)}{\mathrm{d}t}=u(t,a(t)),\quad a(0):=a_0,\quad t\geq0,  \\
&\frac{\mathrm{d} b(t)}{\mathrm{d}t}=u(t,b(t)),\quad b(0):=b_0, \quad t\geq0, \\
&\rho>0,\quad x\in(0,a(t))\cup(b(t),1),\quad t\geq0.
\end{aligned}\right.
\end{equation}
Note that the above scenario can be easily seen  from the Lagrangian trajectory as in Section \ref{S4}.
It is important to note  that each individual portion of the gas itself obeys mass conservation laws:
\begin{equation*}
\int_0^{a(t)}\rho(t,x) dx=\int_0^{a_0}\rho_0 dx:=M_L.
\end{equation*}
Indeed,
\begin{align*}
  \frac{\mathrm{d}}{\mathrm{d}t}\int_0^{a(t)}\rho(t,x) dx&=\frac{\mathrm{d} a(t)}{\mathrm{d}t}\rho(t,a(t))+\int_0^{a(t)}\pl_t\rho(t,x) dx\\
  &=u(t,a(t))\rho(t,a(t))-\int_0^{a(t)}\pl_x(\rho u)dx=0.
\end{align*}

To prove \eqref{cor-est1}, we first focus on the behavior of $a(t)$. Due to conservation laws of mass, Proposition \ref{decay1} and H\"{o}lder's inequality, we write that
\begin{align*}
  \left|a(t)-M_L\right|&=\left|a(t)-\int_0^{a(t)}\rho(t,x)dx\right|\\
  &\leq \int_\Omega|1-\rho(t,x)|dx\leq \|\rho-1\|_{L^2}\leq Ce^{-\alpha t}.
\end{align*}

As for $b(t)$, notice that $\|\rho(t, \cdot)\|_{L^1(\Omega)}=1$ and thus
\begin{equation*}
  \left|M_L-b(t)\right|=\left|(1-M_L)-(1-b(t))\right|=\left|\int_{b(t)}^{1(t)}(\rho(t,x)-1)dx\right|\leq Ce^{-\alpha t}.
\end{equation*}
Hence,  we have
\begin{equation*}
  |a(t)-M_L| +|b(t)-M_L|\leq Ce^{-\alpha t}.
\end{equation*}\qed
\bigbreak

\textbf{Acknowledgements}
Part of the work was completed during the authors'  visit to the Tianyuan Mathematical Research Center.
The authors are grateful for the support and warm hospitality  provided by the Tianyuan Mathematical Research Center.     Tan is partially supported by a  Direct Grant (No. 4053715) of CUHK and the Hong Kong RGC under grant CUHK14302525.
Wang would like to thank Professor Chaojiang Xu for his comments and suggestions on this manuscript,  during Wang's visit to School of Mathematics, NUAA.
Wang's research was partially supported by Tianyuan Mathematics Project of NSFC under grant 12426630; and Scientific Research Fund of Zhejiang Provincial Education Department under project Y202454255.   Zhang was supported by National Natural Science Foundation of China (Grants No. 12101472 and No. 12471210) and the Fundamental Research Funds for the Central Universities.
\medskip

\textbf{Data Availability Statement} Data sharing not applicable to this article as no
datasets were generated or analysed during the current study.

\medskip

\textbf{Conflict of interest}  The authors have no conflict of interest to declare.

\normalem

\end{document}